\newtheorem{thm}{Theorem}
\newtheorem{prop}{Proposition}
\newtheorem{cor}{Corollary}
\newtheorem{lem}{Lemma}
\theoremstyle{remark}
\newtheorem{rem}{Remark}
\newcommand{\CP}{\mathbb{CP}}
\newcommand{\Z}{\mathbb{Z}}
\DeclareMathOperator{\SO}{SO}
\DeclareMathOperator{\Or}{O}
\DeclareMathOperator{\SU}{SU}
\newcommand\rank{\operatorname{rank}}
\newcommand\Diff{\operatorname{Diff}}
\title[Branched coverings of simply connected manifolds]
      {Branched coverings of simply connected manifolds}
\author{Christoforos Neofytidis}
\address{Mathematisches Institut, {\smaller LMU} M\"unchen, Theresienstr.~39, 80333~M\"unchen, Germany}
\email{Christoforos.Neofytidis@mathematik.uni-muenchen.de}
\date{\today; \copyright{\ C.~Neofytidis 2012}}
\subjclass[2010]{57M12, 57M05, 55M25}
\keywords{Branched covers, simply connected manifolds, domination by products, mapping degrees}
\begin{document}

\begin{abstract}
We construct branched double coverings by certain direct products of manifolds for connected sums of copies of sphere bundles over the 2-sphere. As an application we answer a question of Kotschick and L\"oh up to dimension five. More precisely, we show that
\begin{itemize}
\item[(1)] every simply connected, closed four-manifold admits a branched double covering by a product of the circle with a connected sum of copies of
$S^2 \times S^1$, followed by a collapsing map;
\item[(2)] every simply connected, closed five-manifold admits a branched double covering by a product of the circle with a connected sum of copies of $S^3 \times S^1$, followed by a map whose degree is determined by the torsion of the second integral homology group of the target.
\end{itemize}
\end{abstract}
\maketitle


\section{Introduction}\label{s:intro}

The realization of manifolds as branched coverings is a classical long-standing problem in topology. A well-known theorem of Alexander~\cite{Al} states
that every oriented, closed, smooth $n$-dimensional manifold is a branched covering of $S^n$. Strong restrictions for the existence of branched coverings
were found by Berstein and Edmonds~\cite{BE}. Branched coverings have been investigated in many different contexts and they turned out 
to be a useful tool for the study of several problems in geometry, such as the minimal genus problem in four dimensions. 

Recall that a branched $d$-fold covering is a smooth proper map $f \colon X \longrightarrow Y$ with a codimension two subcomplex $B_f \subset Y$, called
the branch locus of $f$, such that $f \vert_{X \setminus f^{-1}(B_f)} \colon X \setminus f^{-1}(B_f) \longrightarrow Y \setminus B_f$ is a $d$-fold
covering in the usual sense and for each $x \in f^{-1}(B_f)$ the map $f$ is given by $(z,v) \mapsto (z^m,v)$, for some charts of $x$ and $f(x)$ and some
positive integer $m$. The point $x$ is called singular and its image $f(x)$ is called a branch point.

In dimensions two and three, Edmonds showed that a dominant map is quite often homotopic to a branched covering~\cite{Ed1,Ed2}. More precisely,
Edmonds proved that every non-zero degree map between two closed surfaces is homotopic to the composition of a pinch map followed by a branched
covering~\cite{Ed1}. A pinch map in dimension two is a map which collapses 2-handles, i.e. is a quotient map $\pi \colon \Sigma \longrightarrow
\Sigma/\Sigma'$, where a submanifold $\Sigma' \subset \Sigma$, with circle boundary in the interior of $\Sigma$, is identified to a point. In dimension
three, Edmonds result is that every $\pi_1$-surjective map of degree at least three is homotopic to a branched covering~\cite{Ed2}. The existence of branched coverings in low dimensions has been explored by several other people, including Fox, Hilden, Hirsch and Montesinos. 

Our aim in this paper is to show that every simply connected, closed four-manifold, resp. five-manifold, admits a branched double covering by a product
$S^1 \times N$, composed with a certain pinch map, resp. a map whose degree depends on the torsion of the integral homology of the target.

The main result is the following general statement:

\begin{thm}\label{t:main}
 For $n \geq 4$ and every $k$ there is a branched double covering
\begin{align*}
S^1 \times (\#_k S^{n-2} \times S^1) \longrightarrow \#_k (S^{n-2} \widetilde{\times} S^2),
\end{align*}
where $S^{n-2} \widetilde{\times} S^2$ denotes the total space of the non-trivial $S^{n-2}$-bundle over $S^2$ with structure group $\SO(n-1)$.
\end{thm}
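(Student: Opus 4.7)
The plan is to first handle the base case $k=1$ by a pullback construction, and then to build up the general case inductively via equivariant connected sums. For $k=1$, I view $T^2 = S^1 \times S^1$ as a branched double cover $p \colon T^2 \to S^2$ given by the hyperelliptic involution $(z_1,z_2) \mapsto (-z_1,-z_2)$, which has four fixed points and quotient $S^2$. Since the bundle $E := S^{n-2} \widetilde{\times} S^2 \to S^2$ is classified by the generator of $\pi_1(\SO(n-1)) = \Z/2$ and $p$ has degree $2$, the pullback $p^* E$ is classified by twice this generator, which vanishes. Hence $p^*E$ is the trivial bundle $T^2 \times S^{n-2} = S^1 \times S^{n-2} \times S^1$, and the associated bundle map is the required branched double cover, with branch locus the four $S^{n-2}$-fibers above the branch points of $p$.

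For the inductive step, assuming the covering
\[
f_k \colon S^1 \times (\#_k S^{n-2} \times S^1) \longrightarrow \#_k (S^{n-2} \widetilde{\times} S^2)
\]
has been built with branch locus a disjoint union of $2(k+1)$ copies of $S^{n-2}$, I would construct $f_{k+1}$ by an \emph{equivariant connected sum} of $f_k$ with the base case $f_1$, performed along a branch component on each side. The crucial observation is that, by the local normal form $(z,v) \mapsto (z^2, v)$ of a branched double covering, a small ball around a branch point has a \emph{single} ball as its preimage in the cover (not two). Hence removing such balls from the two target spaces and gluing along the boundary $(n-1)$-spheres produces $\#_{k+1}(S^{n-2} \widetilde{\times} S^2)$, and the corresponding gluing of the single-ball preimages on the covers yields a new branched double covering, whose branch locus again consists of the correct number $2(k+2)$ of fibers after merging one branch component from each side.

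The main obstacle is to verify that the total space of this new cover is indeed $S^1 \times (\#_{k+1} S^{n-2} \times S^1)$, rather than some other $n$-manifold. A na\"ive connected sum of products $(S^1 \times X) \# (S^1 \times Y)$ is \emph{not} diffeomorphic to $S^1 \times (X \# Y)$ in general (already their fundamental groups $\pi_1(S^1 \times X) \ast \pi_1(S^1 \times Y)$ and $\Z \times \pi_1(X \# Y)$ differ), so the product structure on the cover must be engineered rather than inherited. I would do this by performing the gluing inside a tubular neighborhood of a branch $S^{n-2}$-fiber, which has the form $D^2 \times S^{n-2}$ carrying a natural normal $S^1$-direction, choosing the gluing diffeomorphism so that this normal $S^1$ is preserved on both sides. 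With this choice, the global operation reduces to a connected sum internal to the $(n-1)$-dimensional factor, taking $\#_k(S^{n-2} \times S^1)$ to $\#_{k+1}(S^{n-2} \times S^1)$. Making this rigorous requires a careful handle-theoretic analysis of how the covering involution acts on tubular neighborhoods of the branch components and of the framing of their normal bundles.
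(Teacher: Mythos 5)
Your base case $k=1$ is exactly the paper's argument: pull the twisted bundle back along the pillowcase $P\colon T^2\to S^2$ and use that an even-degree map kills the classifying element in $\pi_1(\SO(n-1))=\Z_2$. The gap is in the inductive step. As you correctly observe, a small ball centred at a branch point has a single ball as its preimage; but then connected-summing the targets along such balls forces the covers to be glued along the boundary $(n-1)$-spheres of balls as well, so the new cover is the honest connected sum $\bigl(S^1\times(\#_k S^{n-2}\times S^1)\bigr)\#\bigl(T^2\times S^{n-2}\bigr)$. As you yourself note, this is not a product: in dimension $\geq 4$ its fundamental group is a nontrivial free product, not $\Z\times(\text{free group})$. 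No choice of gluing diffeomorphism of the $S^{n-1}$-boundaries can change this, since the oriented connected sum of connected manifolds is independent of that choice and the $\pi_1$ obstruction is absolute. Your proposed repair --- gluing ``inside a tubular neighborhood $D^2\times S^{n-2}$ of a branch fiber so that a normal $S^1$-direction is preserved'' --- does not resolve it: the normal direction to the codimension-two branch locus is a $2$-disk, not a circle, and removing an entire neighborhood $D^2\times S^{n-2}$ of a branch component from each target and regluing is no longer a connected sum of the targets, so it would not produce $\#_{k+1}(S^{n-2}\widetilde{\times}S^2)$ downstairs.

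The missing idea (and the paper's actual step) is to choose the ball in the target so that its preimage is \emph{not} a ball. View $T^2\times S^{n-2}$ as the trivial $S^1$-bundle over $S^1\times S^{n-2}$, and take $D^n=D^2\times D^{n-2}$ where $D^2\subset S^2$ contains \emph{two} of the four pillowcase branch points. Then $P^{-1}(D^2)$ is an annulus $A$ (the double cover of a disk branched at two points), so the preimage of $D^n$ is $A\times D^{n-2}\cong S^1\times D^{n-1}$, a fibered neighborhood of an $S^1$-fiber. Removing these pieces and gluing two copies along $S^1\times S^{n-2}$ is a fiber sum of trivial $S^1$-bundles, hence produces the trivial $S^1$-bundle over $(S^1\times S^{n-2})\#(S^1\times S^{n-2})$, i.e.\ exactly $S^1\times\bigl(\#_2\, S^{n-2}\times S^1\bigr)$, while downstairs one performs the ordinary connected sum along $D^n$ with the branch loci matching up along $\partial D^n$. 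Iterating this gives all $k$. Your instinct that the product structure must be ``engineered'' was right, but the engineering happens in the choice of the ball downstairs, not in the choice of gluing map upstairs.
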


We note that the statement of Theorem \ref{t:main} does not contain any reference on orientations, because the targets are simply connected and therefore
always orientable.

The existence of dominant maps, not necessarily branched coverings, where the domain is a non-trivial product has an independent interest and its study is
partially motivated by Gromov's work on functorial semi-norms on homology~\cite{gromovmetric}. Kotschick and L\"oh~\cite{KL} investigated such maps and
showed that many targets with suitably large fundamental groups are not dominated by products. On the other hand, the fundamental group conditions given
in~\cite{KL} are not always sufficient to deduce domination by products, cf.~\cite{KN,NeoThesis}.

However, as pointed out in~\cite{KL}, it is natural to ask whether every connected, oriented, closed manifold with finite fundamental group is dominated by a non-trivial product. In order to study this question, it obviously suffices to obtain an answer for simply connected targets. In dimensions two and three, the
answer is easy and affirmative, because $S^2$ and $S^3$ respectively represent the only simply connected manifolds in those two dimensions.

In this paper, we combine our constructions of branched coverings, with some classification results for simply connected manifolds in dimensions four and
five (cf.~\cite{Wall0,Fre} and~\cite{Sm,B} respectively) to obtain the following:

\begin{thm}\label{t:4and5-mfds}
 Every connected, oriented, closed manifold with finite fundamental group in dimensions four and five is dominated by a non-trivial product.
\end{thm}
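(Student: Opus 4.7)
The plan is to combine Theorem~\ref{t:main} with the classical classifications of simply connected closed manifolds in dimensions four and five, after first reducing to the simply connected case. If $M$ is a closed oriented manifold with finite fundamental group, then its universal covering $p\colon\widetilde M\to M$ is a finite cover, hence a map of non-zero degree; thus it suffices to exhibit, for every simply connected closed $\widetilde M$ in dimensions four or five, a non-trivial product dominating $\widetilde M$, and then compose with $p$.

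For a simply connected closed 4-manifold $\widetilde M$ I would invoke the Wall-Freedman classification to produce, for suitable $k$, a degree-one collapsing map $c\colon\#_k(S^2\widetilde{\times}S^2)\to\widetilde M$: the intersection form of $\widetilde M$ appears as a quotient of the standard form of $\#_k(S^2\widetilde{\times}S^2)\cong\#_k(\CP^2\#\overline{\CP^2})$, and $c$ is realized geometrically as the pinch map collapsing the handles dual to the complementary summand. Composing $c$ with the branched double cover supplied by Theorem~\ref{t:main} (case $n=4$) then yields a non-zero degree map $S^1\times\#_k(S^2\times S^1)\to\widetilde M$. For a simply connected closed 5-manifold $\widetilde M$ one uses instead the Smale-Barden classification, which decomposes $\widetilde M$ as a connected sum of standard pieces: products $S^3\times S^2$ and twisted products $S^3\widetilde{\times}S^2$ carrying the free part of $H_2$, together with torsion pieces (the Wu manifold and its analogues) indexed by the cyclic summands of the torsion subgroup of $H_2(\widetilde M;\Z)$. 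Each such piece admits a non-zero degree map from $S^3\widetilde{\times}S^2$ whose degree is governed by the orders of the relevant torsion summands; assembling these over the connect sum produces a single map $\#_k(S^3\widetilde{\times}S^2)\to\widetilde M$ of the required torsion-dependent degree, which precomposes with Theorem~\ref{t:main} (case $n=5$) to yield the desired domination by $S^1\times\#_k(S^3\times S^1)$.

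The principal obstacle is the construction of the non-zero degree maps onto the simply connected targets themselves. In dimension four, extracting a collapsing map from an algebraic decomposition of the intersection form is straightforward for indefinite forms but more delicate in the presence of exotic Freedman pieces such as the $E_8$-manifold, where one must use topological (rather than smooth) surgery. In dimension five the sensitive point is producing and tracking the degrees of the maps onto the torsion building blocks, since it is precisely the orders of the cyclic summands of the torsion of $H_2$ that dictate the torsion-dependent degree advertised in the abstract; once these building-block maps are in place, the passage from the classification to Theorem~\ref{t:4and5-mfds} is a formal composition.
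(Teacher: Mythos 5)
Your reduction to simply connected targets via the universal covering and your dimension-four argument coincide with the paper's (Theorem \ref{t:4-mfds}): stabilize $M$ by connected sums with $\CP^2$ and $\overline{\CP^2}$ until the intersection form is odd and indefinite, identify the result with $\#_k\CP^2\#_k\overline{\CP^2}\cong\#_k(S^2\widetilde{\times}S^2)$ by Wall (smooth case) or Freedman (topological case), collapse the added $\pm1$-spheres to get a degree-one map back to $M$, and precompose with the branched double covering of Theorem \ref{t:main}. The only point you leave vague is the Kirby--Siebenmann obstruction; the paper disposes of it not by surgery but by replacing $M$ with the homotopy-equivalent manifold of the same odd form and vanishing KS invariant, which is harmless since a homotopy equivalence has degree $\pm1$.

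In dimension five there are two genuine gaps. First, you funnel everything through $\#_k(S^3\widetilde{\times}S^2)$ and assert that every Barden summand, including $S^3\times S^2$, receives a non-zero degree map from the \emph{twisted} bundle. The covering actually constructed in the paper goes the other way: pulling back the twisted bundle along a degree-two map $S^2\to S^2$ trivializes it, giving $S^2\times S^3\to S^2\widetilde{\times}S^3$, whereas pulling back the trivial bundle can never produce the twisted total space; no map $S^3\widetilde{\times}S^2\to S^3\times S^2$ of non-zero degree is provided by this method. The paper therefore uses $\#_k(S^2\times S^3)$ as the intermediate stage and dominates \emph{it} by $S^1\times(\#_k S^1\times S^3)$ via Theorem \ref{t:general} (the direct-product analogue of Theorem \ref{t:main}); Theorem \ref{t:main} alone does not cover the spin case. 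Second, the existence of non-zero degree maps onto the torsion summands $M_k$ and $X_m$ is precisely the non-formal content of the five-dimensional case, and you name it as the sensitive point without supplying a mechanism. The paper obtains it from Serre's Hurewicz theorem modulo a Serre class (Corollary \ref{c:Ru}): a simply connected closed $n$-manifold whose homology in degrees $0<i<n$ is $k$-torsion admits a map from $S^n$ of degree a power of $k$; combined with the branched double covering $S^2\times S^3\to S^5$ (pulled back from $S^2\times S^2\to\CP^2$ along the circle bundle $S^5\to\CP^2$), this dominates the torsion part. Finally, assembling the summand maps over the connected sum is not purely formal either: one must equalize the degrees using self-maps of arbitrary degree on $S^5$ and $S^2\times S^3$ and then invoke Lemma \ref{l:consum} to glue. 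Without these three ingredients the five-dimensional half of the theorem is not established.
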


In dimension four, Kotschick and L\"oh~\cite{KL} have previously obtained a non-constructive proof for the above statement based on a result of Duan and
Wang~\cite{DWa}. Our proof here is independent of those earlier works and it moreover gives an explicit construction of a degree two map from a product
of type $S^1 \times (\# S^2 \times S^1)$ to every simply connected four-manifold, obtained as the composition of a branched double covering with a
certain degree one map; cf. Theorem \ref{t:4-mfds}.

As we shall see in the course of the proof of Theorem \ref{t:4-mfds}, the (stable) diffeomorphism classification of Wall~\cite{Wall0} and Freedman~\cite{Fre} will give us the desired degree one map $\CP^2\#\overline{\CP^2} \longrightarrow M$. It is well-known that the homotopy classification of simply connected four-manifolds alone implies the existence of such a degree one map between the homotopy types of $\CP^2\#\overline{\CP^2}$  and $M$. However, we appeal to the (stable) diffeomorphism classification theorems in order to obtain a considerably stronger result where all our dominant maps are smooth (whenever $M$ itself is smooth). With those smooth maps in hand, we will be able to compare our construction with the aforementioned results of Edmonds in dimensions two and three; see the discussion following Remark \ref{r:Wall1} in Section \ref{s:4-mfds} and Section \ref{ss:final2}.

\subsection*{Outline} In Section \ref{s:construction}, we first prove Theorem \ref{t:main} and then we discuss briefly the
existence of branched double coverings for connected sums of direct products with a sphere factor. 
In Sections \ref{s:4-mfds} and \ref{s:5-mfds}, we apply the constructions of Section \ref{s:construction} to simply connected four- and five-manifolds respectively in order to
prove Theorem \ref{t:4and5-mfds}. In the final Section \ref{s:final}, we give another application in higher dimensions and we make further remarks.

\subsection*{Acknowledgments}
I am grateful to D. Kotschick for his support and inspiration and to P. Derbez, C. L\"oh, J. Souto and S. Wang for useful comments and discussions on an earlier draft of this paper. Also, I would like to thank the referee, in particular about the alternative argument in dimension five given in Remark \ref{r:5-mfdsandTan}. 

This work was funded by the {\em Deutscher Akademischer Austausch Dienst} (DAAD).

\section{Construction of branched double coverings}\label{s:construction}

We begin this section by proving Theorem \ref{t:main}. After that, we give a branched double covering for every connected sum of copies of a direct
product $M \times S^2$, where $M$ is any oriented, closed $n$-dimensional manifold (cf. Theorem \ref{t:general}).

\begin{figure}
\labellist
\pinlabel {\tiny $D^2=P(A)$} at 235 149
\pinlabel {\tiny $x_1$} at -5 91
\pinlabel {\tiny $x_2$} at 82 91
\pinlabel {\tiny $x_3$} at 82 -5
\pinlabel {\tiny $x_4$} at -5 -5
\pinlabel {\tiny $y_1$} at 355 137
\pinlabel {\tiny $y_2$} at 386 137
\pinlabel {\tiny $y_3$} at 391 23
\pinlabel {\tiny $y_4$} at 357 23
\pinlabel {$\stackrel{P}\longrightarrow$} at 235 87
\endlabellist
\centering
\includegraphics[width=8cm]{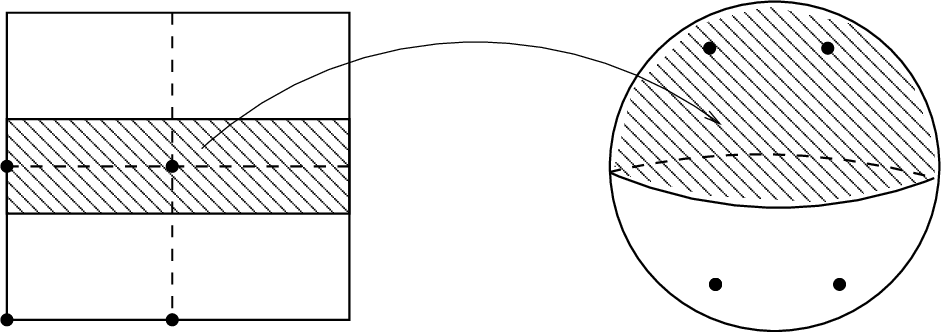}
\caption{\small The 2-torus is a branched double covering of the 2-sphere, $P \colon T^2 \longrightarrow S^2$, branched along four points $P(x_i) = y_i$, $i \in \{1,2,3,4\}$, and so that $P(A) = D^2$, where $A$ is an annulus in $T^2$ containing two singular points.}
\label{f:branchcovtor}
\end{figure}

\begin{proof}[Proof of Theorem \ref{t:main}]
 By assumption, we consider oriented $S^{n-2}$-bundles over $S^2$ with structure group $\SO(n-1)$, where $n \geq 4$. These bundles are classified
by $\pi_1(\SO(n-1)) = \Z_2$ (cf. Steenrod~\cite{Steen}), therefore there exist only two; the product $S^{n-2} \times S^2$ and the twisted bundle $S^{n-2}
\widetilde{\times} S^2$. Let $\pi \colon S^{n-2} \widetilde{\times} S^2 \longrightarrow S^2$ denote the twisted bundle. There is a branched double covering
with four branch points, $P \colon T^2 \longrightarrow S^2$, given by the quotient for an involution on $T^2$; see Figure \ref{f:branchcovtor} (this map is 
known as ``pillowcase''). We pull back $\pi$ by $P$ to obtain a branched double covering $P^* \colon P^*(S^{n-2} \widetilde{\times} S^2) \longrightarrow S^{n-2} \widetilde{\times} S^2$. Now $P^*(S^{n-2} \widetilde{\times} S^2)$ is the total space of an oriented $S^{n-2}$-bundle over $T^2$ with structure group $\SO(n-1)$. Again, there exist
only two such bundles and since $P$ has even degree we deduce that $P^*(S^{n-2} \widetilde{\times} S^2)$ is the trivial bundle, i.e. the product $T^2
\times S^{n-2}$. Therefore, $T^2 \times S^{n-2}$ is a branched double covering of $S^2 \widetilde{\times} S^{n-2}$, with branch locus four copies of the
$S^{n-2}$-fiber of $S^2 \widetilde{\times} S^{n-2}$, given by the preimages under $\pi$ of the four branch points of $P$. This proves the statement of
Theorem \ref{t:main} for $k = 1$.

Next, we prove the claim for $k \geq 2$. Let the branched covering $P^* \colon T^2 \times S^{n-2} \longrightarrow S^2 \widetilde{\times}
S^{n-2}$ constructed above. We can think of $T^2 \times S^{n-2}$ as a trivial $S^1$-bundle over $S^1 \times S^{n-2}$. We thicken an $S^1$-fiber of this
bundle to an annulus $A$ in $T^2$ so that $P(A) = D^2$, as in Figure \ref{f:branchcovtor}. A fibered neighborhood of this $S^1$-fiber is a product $S^1
\times D^{n-1}$ in $T^2 \times S^{n-2}$ and $P^*(S^1 \times D^{n-1})$ is an $n$-disk $D^n$ in $S^2 \widetilde\times S^{n-2}$. We now remove the fibered
neighborhood $S^1 \times D^{n-1}$ from two copies of $T^2 \times S^{n-2}$ and perform a fiber sum by gluing the $S^1 \times S^{n-2}$-boundaries. Since $T^2
\times S^{n-2}$ is a trivial $S^1$-bundle this fiber sum will produce another trivial $S^1$-bundle, namely  a product $S^1 \times ((S^1 \times S^{n-2}) \#
(S^1 \times S^{n-2}))$. At the same time, we connected sum two copies of $S^2 \widetilde\times S^{n-2}$ along $D^n$, so that the branch loci fit together.
We have now obtained a branched double covering
\begin{align*}
 S^1 \times ((S^1 \times S^{n-2}) \# (S^1 \times S^{n-2})) \longrightarrow (S^2 \widetilde{\times} S^{n-2}) \# (S^2 \widetilde{\times} S^{n-2}),
\end{align*}
proving the claim for $k = 2$. For $k > 2$ we iterate the above construction.
\end{proof}

An immediate consequence of Theorem \ref{t:main} is the following:

\begin{cor}
For $n\geq 4$ and $k\geq 0$, every connected sum $\#_k(S^{2}\widetilde{\times}S^{n-2})$ is a quotient space of the direct product $S^1\times(\#_kS^{n-2}\times S^1)$.
\end{cor}

The statement of Theorem \ref{t:main} is the most general possible concerning oriented $S^{n-2}$-bundles over $S^2$, but it moreover presupposes the
structure group of these bundles to be linear. This assumption is not necessary in dimensions four and five (cf.~\cite{Smale} and~\cite{Ha} respectively)
and so we will be able to construct many branched coverings in those two dimensions; cf. Sections \ref{s:4-mfds} and \ref{s:5-mfds}.

The inspiration for the construction of the branched covering of Theorem \ref{t:main} stems from a previous joint work with Kotschick on domination for
three-manifolds by circle bundles~\cite{KN}. More precisely, we proved there that for every $k$ there is a $\pi_1$-surjective branched double covering $S^1
\times (\#_k S^1 \times S^1) \longrightarrow \#_k(S^1 \times S^2)$. A direct generalization of that construction is achieved if we replace $S^1$ in
$\#_k(S^1 \times S^2)$ by any oriented, closed $n$-dimensional manifold $M$. The steps of the proof follow those of~\cite{KN} and they are left to
the reader:

\begin{thm}\label{t:general}
 Let $M$ be a connected, oriented, closed $n$-dimensional manifold. For every $k$ there is a $\pi_1$-surjective branched double covering
$S^1 \times (\#_k M \times S^1) \longrightarrow \#_k (M \times S^2)$.
\end{thm}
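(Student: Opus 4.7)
The plan is to mirror the proof of Theorem \ref{t:main}, with the fiber sphere $S^{n-2}$ there replaced by the general manifold $M$. Since the target $\#_k(M\times S^2)$ involves only the trivial $S^2$-bundle $M\times S^2$, no bundle-pullback step is required and the entire construction stays within products.

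For the base case $k=1$, let $P\colon T^2\longrightarrow S^2$ be the pillowcase branched double cover of Figure~\ref{f:branchcovtor}. The product map $P\times\mathrm{id}_M\colon T^2\times M\longrightarrow S^2\times M$ is a branched double covering, with branch locus the four copies $\{y_i\}\times M$ of $M$, one over each branch point $y_i$ of $P$. Rewriting the domain as $S^1\times(S^1\times M)$ gives the theorem for $k=1$. On fundamental groups this map factors through the projection $\pi_1(T^2)\times\pi_1(M)\twoheadrightarrow\pi_1(M)$, so $\pi_1$-surjectivity is automatic.

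For $k\geq 2$, we iterate the fiber-sum construction from the proof of Theorem \ref{t:main}. View $T^2\times M$ as a trivial $S^1$-bundle over $S^1\times M$, and choose an $S^1$-fiber whose thickening to an annulus $A\subset T^2$ satisfies $P(A)=D^2$ and contains exactly two of the four singular points of $P$. The corresponding fibered neighborhood in $T^2\times M$ is a product $S^1\times D^{n+1}$, and its image under $P\times\mathrm{id}_M$ is an $(n+2)$-disk in $S^2\times M$ containing two of the branch components. Removing $S^1\times D^{n+1}$ from two copies of $T^2\times M$ and fiber-summing along the common $S^1\times S^n$-boundary produces another trivial $S^1$-bundle, namely $S^1\times\bigl((S^1\times M)\#(S^1\times M)\bigr)$; simultaneously excising the corresponding $(n+2)$-disks from two copies of $M\times S^2$ and gluing along $S^{n+1}$ realises $(M\times S^2)\#(M\times S^2)$ in such a way that the branch loci fit together. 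Performing this operation $k-1$ times yields the desired cover.

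The main technical point, exactly as in the proof of Theorem \ref{t:main}, is the choice of the annulus $A$: it must contain precisely two of the singular points of $P$, so that $A$ honestly double-branch-covers $D^2$ and hence the fibered neighborhood $S^1\times D^{n+1}$ honestly double-branch-covers a genuine $(n+2)$-disk, ensuring that the fiber sum on the domain is compatible with the connected sum on the target. Given this, $\pi_1$-surjectivity propagates under each iteration by a straightforward Seifert--van Kampen argument: each half of the fiber-sum decomposition maps $\pi_1$-surjectively onto the corresponding half of the connected sum by the $k=1$ case (removing an open disk does not change the fundamental group of either piece), so the glued map is surjective on $\pi_1$ as well. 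This is exactly the argument used in the three-dimensional case of~\cite{KN}.
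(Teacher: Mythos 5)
Your proposal is correct and follows exactly the route the paper intends: the paper explicitly leaves this proof to the reader as a direct adaptation of the pillowcase/fiber-sum argument of Theorem \ref{t:main} and of~\cite{KN}, and your construction (base case via $P\times\mathrm{id}_M$ with no pullback step needed since the bundle is trivial, then iterated fiber sums along $S^1\times D^{n+1}$ with the annulus $A$ chosen to contain exactly two singular points) is that adaptation, with all dimensions and the van Kampen argument for $\pi_1$-surjectivity checking out.
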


\begin{rem} 
The constructions given in Theorems \ref{t:main} and \ref{t:general} are of different nature, because the connected
summands of the target in Theorem \ref{t:main} are twisted products, while in Theorem \ref{t:general} the summands are direct products. For this reason the
proof of Theorem \ref{t:main} cannot be deduced using Theorem \ref{t:general}. (Nevertheless, Theorem \ref{t:general} could be seen as
a special case of Theorem \ref{t:main} for $M = S^{n}$.) A further generalization of Theorem \ref{t:main} would be to consider targets that are connected
sums of fiber bundles over $S^2$, where the fiber is an arbitrary oriented, closed manifold $M$. However, the comprehension of arbitrary twisted products
$S^2 \widetilde{\times} M$ (and of connected sums built out of such summands) seems, in general, not sufficiently enough to produce further generalization
of Theorem \ref{t:main}.
\end{rem}

\begin{rem}\label{r:generalpillow}
The two-dimensional pillowcase $T^2=S^1\times S^1 \longrightarrow S^2$ can be generalized for any sphere $S^n, \ n\geq 2$. Namely, for every $n > k \geq 1$, there is a branched double covering $P \colon S^k \times S^{n - k} \longrightarrow S^n$. The branched locus of $P$ is $B_P = S^{k-1} \times S^{n-k-1}$ and there is an $n$-ball $D^n \subset S^n$ such that $P^{-1}(D^n) = S^k \times D^{n-k}$. Therefore, in place of $S^2$ in Theorem \ref{t:general} we may take any sphere of dimension at least two. We refer to Chapter 3 of~\cite{NeoThesis} for further details.
\end{rem}



\section{Simply connected four-manifolds}\label{s:4-mfds}

We now apply Theorem \ref{t:main} to simply connected four-manifolds. First, we show that every four-manifold diffeomorphic to a connected sum $\#_k \CP^2
\#_k \overline{\CP^2}$ admits a branched double covering by a product $S^1 \times (\#_k S^1 \times S^2)$. We then deduce by the classification results of
Wall~\cite{Wall0} and Freedman~\cite{Fre}, that every simply connected, closed four-manifold admits a branched double covering by a
product of the circle with a connected sum of copies of $S^2 \times S^1$, followed by a collapsing map.

By a result of Smale~\cite{Smale}, the inclusion $\SO(3) \hookrightarrow \Diff^+(S^2)$ is a homotopy equivalence and so there exist only two oriented
$S^2$-bundles over $S^2$. Moreover, $\CP^2 \#\overline{\CP^2}$ is diffeomorphic to $S^2 \widetilde{\times} S^2$ (cf. Wall~\cite{Wall}), therefore Theorem
\ref{t:main} implies the following for $n = 4$:

\begin{cor}\label{c:4-mfds}
 For every $k$ there is a branched double covering $S^1 \times (\#_k S^1 \times S^2) \longrightarrow \#_k \CP^2 \#_k \overline{\CP^2}$.
\end{cor}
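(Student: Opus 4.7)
The plan is simply to specialize Theorem \ref{t:main} to $n=4$ and then rewrite the right-hand side using known classification results for $S^2$-bundles over $S^2$.

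First, I would check that the linearity hypothesis on the structure group built into Theorem \ref{t:main} is automatic in this dimension. Smale's theorem \cite{Smale} asserts that the inclusion $\SO(3) \hookrightarrow \Diff^+(S^2)$ is a homotopy equivalence, so every oriented $S^2$-bundle over any base admits a reduction of structure group to $\SO(3)$. In particular the oriented $S^2$-bundles over $S^2$ are classified by $\pi_1(\SO(3)) = \Z_2$, which gives exactly the trivial bundle $S^2 \times S^2$ and the twisted bundle $S^2 \widetilde{\times} S^2$. Thus Theorem \ref{t:main} applies without the linearity proviso being an extra assumption, and plugging in $n = 4$ yields, for every $k$, a branched double covering
\begin{align*}
S^1 \times (\#_k S^2 \times S^1) \longrightarrow \#_k (S^2 \widetilde{\times} S^2).
\end{align*}

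Next, I would invoke Wall's identification \cite{Wall} of $S^2 \widetilde{\times} S^2$ with $\CP^2 \# \overline{\CP^2}$ as smooth manifolds. Combined with the commutativity and associativity of connected sum, this gives a diffeomorphism
\begin{align*}
\#_k (S^2 \widetilde{\times} S^2) \;\cong\; \#_k (\CP^2 \# \overline{\CP^2}) \;=\; \#_k \CP^2 \#_k \overline{\CP^2},
\end{align*}
under which the branched double covering above becomes exactly the map claimed in the corollary.

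There is no serious obstacle to anticipate here: the corollary is essentially a bookkeeping consequence of Theorem \ref{t:main} together with the two cited classification facts. The only point requiring a moment of care is verifying that Smale's theorem legitimately removes the structure-group restriction so that Theorem \ref{t:main} is directly applicable to \emph{all} oriented $S^2$-bundles over $S^2$ in this dimension.
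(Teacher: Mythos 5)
Your proposal is correct and follows essentially the same route as the paper: specialize Theorem \ref{t:main} to $n=4$, use Smale's theorem that $\SO(3)\hookrightarrow\Diff^+(S^2)$ is a homotopy equivalence to see the linearity hypothesis is no restriction, and apply Wall's diffeomorphism $S^2\widetilde{\times}S^2\cong\CP^2\#\overline{\CP^2}$ to rewrite the target. Nothing is missing.
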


Given a connected sum $P \# Q$, we define a pinch map $P \# Q \longrightarrow P$, by collapsing the gluing sphere and then mapping $Q$ to a point. This
degree one map is called collapsing map. For a connected sum $P \# \overline{\CP^2}$, instead of the whole summand $\overline{\CP^2}$, we may collapse the
exceptional embedded sphere $\overline{\CP^1} \subset \overline{\CP^2}$ to obtain again a degree one map $P \# \overline{\CP^2} \longrightarrow P$. The
collapsed $\overline{\CP^1}$ has self-intersection number $-1$ and is called $-1$-sphere. If $P$ is smooth, then this is the usual blow-down operation.
Similarly, for a connected sum $P \# \CP^2$ we obtain a degree one map $P \# \CP^2 \longrightarrow P$ by collapsing the embedded $+1$-sphere $\CP^1 \subset
\CP^2$. In the smooth category, this operation is known as the antiblow-down of $P$.

In the light of Corollary \ref{c:4-mfds}, we now rely on results of Wall~\cite{Wall0} and Freedman~\cite{Fre} on the classification of simply connected,
closed four-manifolds to obtain the following statement which moreover proves Theorem \ref{t:4and5-mfds} in dimension four:

\begin{thm}\label{t:4-mfds}
Every simply connected, closed four-manifold $M$ admits a degree two map by a product $S^1 \times (\#_k S^2 \times S^1)$, which
is given by the composition of a branched double covering $S^1 \times (\#_k S^2 \times S^1) \longrightarrow \#_k \CP^2 \#_k \overline{\CP^2}$ with a
collapsing map $\#_k \CP^2 \#_k \overline{\CP^2} \longrightarrow M$.
\end{thm}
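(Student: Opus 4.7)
The plan is to combine Corollary \ref{c:4-mfds}, which already supplies the branched double covering $S^1 \times (\#_k S^2 \times S^1) \longrightarrow \#_k \CP^2 \#_k \overline{\CP^2}$ for every $k$, with a collapsing map from some balanced connected sum $\#_k \CP^2 \#_k \overline{\CP^2}$ onto an arbitrary simply connected closed four-manifold $M$; composing the two maps then yields the required degree two map.

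To produce the collapsing map I would first replace $M$ by the larger manifold $N := M \# \overline{M} \# \CP^2 \# \overline{\CP^2}$, which comes with an evident degree one collapsing map $N \longrightarrow M$ obtained by pinching the summand $\overline{M} \# \CP^2 \# \overline{\CP^2}$ to a point. The point of enlarging $M$ in this way is that the intersection form of $N$ equals $Q_M \oplus (-Q_M) \oplus \langle 1 \rangle \oplus \langle -1 \rangle$, which is unimodular, odd, indefinite and of signature zero; by the standard classification of indefinite unimodular symmetric bilinear forms over $\Z$, it must therefore be isomorphic to $k\langle 1 \rangle \oplus k\langle -1 \rangle$ with $k = b_2(M) + 1$. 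Moreover, since the Kirby--Siebenmann invariant is additive under connected sum and insensitive to orientation, one computes $\mathrm{ks}(N) = 2\,\mathrm{ks}(M) = 0$ in $\Z/2$.

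I would then invoke Freedman's classification theorem from \cite{Fre}, together with Wall's stable results from \cite{Wall0}: a simply connected closed topological four-manifold is determined up to homeomorphism by its intersection form and its Kirby--Siebenmann invariant, and the unique such manifold with odd form $k\langle 1 \rangle \oplus k\langle -1 \rangle$ and vanishing Kirby--Siebenmann invariant is precisely $\#_k \CP^2 \#_k \overline{\CP^2}$. Hence $N$ is homeomorphic to $\#_k \CP^2 \#_k \overline{\CP^2}$, and under this identification the pinching map $N \longrightarrow M$ becomes the required collapsing map $\#_k \CP^2 \#_k \overline{\CP^2} \longrightarrow M$. Pre-composing with the branched double covering of Corollary \ref{c:4-mfds} delivers the degree two map and completes the proof.

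The step demanding real care is the Kirby--Siebenmann bookkeeping: a naive attempt based on $M \# \CP^2$ alone breaks down whenever $\mathrm{ks}(M) = 1$, since then $M \# \CP^2$ still has nontrivial Kirby--Siebenmann invariant and, although its intersection form is odd, it is not homeomorphic to any smooth connected sum of $\CP^2$'s and $\overline{\CP^2}$'s. Doubling via $M \# \overline{M}$ is precisely what cancels this $\Z/2$-obstruction and allows one argument to treat smoothable and non-smoothable targets uniformly; the verification that the doubled form diagonalizes to $k\langle 1 \rangle \oplus k\langle -1 \rangle$ with $k = b_2(M) + 1$ is then routine from the structure theorem for indefinite unimodular forms.
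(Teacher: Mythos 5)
Your proposal is correct, and it follows the same overall strategy as the paper: the branched double covering is taken from Corollary \ref{c:4-mfds}, and the real content is the construction of a degree one collapsing map $\#_k \CP^2 \#_k \overline{\CP^2} \longrightarrow M$. Where you differ is in how that collapsing map is produced. The paper stabilizes $M$ by summing with copies of $\CP^2$ and $\overline{\CP^2}$ until the intersection form is odd and indefinite (hence diagonal), and then argues in two cases: for smooth $M$ it uses Wall's stable diffeomorphism classification (with the side benefit that the collapsing map is smooth), while for non-smooth $M$ it reduces to the situation where the Kirby--Siebenmann invariant vanishes and applies Freedman. Your doubling trick $N = M \# \overline{M} \# \CP^2 \# \overline{\CP^2}$ treats both cases uniformly: additivity and orientation-independence of $\mathrm{ks}$ force $\mathrm{ks}(N)=0$ whatever $\mathrm{ks}(M)$ is, the signature is killed automatically, and the classification of odd indefinite unimodular forms together with Freedman's theorem identifies $N$ with $\#_k\CP^2\#_k\overline{\CP^2}$ for $k=b_2(M)+1$. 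This is a clean way to dispose of the $\mathrm{ks}(M)=1$ case, which is exactly the delicate point in the paper's argument, since no sum of $M$ with copies of $\pm\CP^2$ alone can be homeomorphic to a smooth connected sum when $\mathrm{ks}(M)=1$. The price is that for smooth $M$ you only obtain a homeomorphism from Freedman rather than a (stable) diffeomorphism from Wall, so your collapsing map is merely continuous even for smooth targets; the theorem as stated does not require more, so your proof is complete. (Incidentally, \cite{Wall0} is not actually used in your argument --- Freedman plus the algebraic classification of indefinite forms suffices.)
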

\begin{proof}
 By Corollary \ref{c:4-mfds}, it suffices to show that for every simply connected, closed four-manifold $M$ there exists a $k$ and a collapsing map
$\#_k \CP^2 \#_k \overline{\CP^2} \longrightarrow M$.

First, if necessary, we perform connected sums of $M$ with copies of $\CP^2$ or $\overline{\CP^2}$ (or both) to obtain a manifold $M \#_p \CP^2 \#_q
\overline{\CP^2}$, whose intersection form is odd and indefinite (and therefore diagonal; cf.~\cite{MH}).

If $M$ is smooth, then Wall's stable diffeomorphism classification in dimension four~\cite{Wall0} implies that $M\#_p \CP^2 \#_q \overline{\CP^2}$ is
stably diffeomorphic to a connected sum $\#_l \CP^2 \#_m \overline{\CP^2}$. The connected summing with $\overline{\CP^2}$, resp. $\CP^2$, is the blow-up,
resp. antiblow-up, operation.

For $M$ non-smooth, we may first assume that the homotopy type of $M$ has trivial Kirby-Siebenmann invariant 
and then deduce that $M\#_p \CP^2 \#_q \overline{\CP^2}$ is homeomorphic to a connected sum $\#_l \CP^2 \#_m
\overline{\CP^2}$, by Freedman's topological classification of simply connected four-manifolds~\cite{Fre}. In particular, $M\#_p \CP^2 \#_q
\overline{\CP^2}$ inherits a smooth structure.

We can now assume that, in both cases, $M\#_p \CP^2 \#_q \overline{\CP^2}$ is (stably) diffeomorphic to a connected sum $\#_k \CP^2 \#_k \overline{\CP^2}$,
after connected summing with more copies of $\CP^2$ or $\overline{\CP^2}$, if necessary.

Finally, a degree one collapsing map
\begin{align*}
\#_k \CP^2 \#_k \overline{\CP^2} \cong M \#_p \CP^2 \#_q \overline{\CP^2} \longrightarrow M
\end{align*}
is obtained by collapsing the $q$ embedded exceptional spheres $\overline{\CP^1} \subset \overline{\CP^2}$ and the $p$ embedded spheres $\CP^1
\subset \CP^2$. If $M$ is smooth, then the collapsing map is also smooth. 

We have now completed the proof of Theorem \ref{t:4-mfds} and therefore of Theorem \ref{t:4and5-mfds} in dimension four.
\end{proof}

\begin{rem}\label{r:Wall1}
 Wall's stable diffeomorphism classification~\cite{Wall0} is obtained after adding summands $S^2 \times S^2$. However, for simply connected, closed, smooth
four-manifolds with odd intersection form (as in the proof of Theorem \ref{t:4-mfds}), this is equivalent to adding summands $\CP^2 \# \overline{\CP^2}$,
again by a result of Wall~\cite{Wall}; see also Remark \ref{r:Wall2}.
\end{rem}

In dimension two, Edmonds~\cite{Ed1} proved that every non-zero degree map between two closed surfaces is homotopic to the composition of a pinch map
followed by a branched covering. We observe that the maps constructed in Theorem \ref{t:4-mfds} for every simply connected, closed four-manifold have an
analogy to Edmonds result, where however the order between the pinch map and the branched covering is reversed.

Moreover, Edmonds~\cite{Ed1} showed that a non-zero degree map between two closed surfaces, $f \colon \Sigma \longrightarrow F$, is homotopic to a branched
covering if and only if either $f$ is $\pi_1$-injective or $|\deg(f)| > [\pi_1(F) \colon \pi_1(f)(\pi_1(\Sigma))]$. This implies that every non-zero degree
map between two closed surfaces can be lifted to a ($\pi_1$-surjective) map which is homotopic either to a pinch map (absolute degree one) or to a branched
covering.

In dimension three, every $\pi_1$-surjective map of degree greater than two between two closed three-manifolds is homotopic to a branched covering, again
by a result of Edmonds~\cite{Ed2}. This result fails in dimension four, by a recent work of Pankka and Souto~\cite{PS}, where it is shown that $T^4$ is not
a branched covering of $\#_3 (S^2 \times S^2)$, while every integer can be realized as the degree for a map from $T^4$ to $\#_3 (S^2 \times S^2)$ (by a criterion 
of Duan and Wang~\cite{DWa}; see Section \ref{ss:final2}).

\section{Simply connected five-manifolds}\label{s:5-mfds}

In this section we show that every closed five-manifold with finite fundamental group is dominated by products. We first recall the classification of
simply connected, closed five-manifolds by Smale~\cite{Sm} and Barden~\cite{B} and show that every five-manifold diffeomorphic to a connected sum of copies
of the two $S^3$-bundles over $S^2$ admits a branched double covering by a product $S^1 \times (\#_k S^1 \times S^3)$. We then give some existence results
of dominant maps between simply connected five-manifolds and using these results we prove Theorem \ref{t:4and5-mfds} for five-manifolds.

Given two $n$-dimensional manifolds $M$ and $N$ with boundaries $\partial M$ and $\partial N$ respectively, we form a new manifold
$M \cup_fN$, where $f$ is an orientation reversing diffeomorphism of any $(n-1)$-dimensional submanifold of $\partial N$ with one of $\partial M$.

Smale~\cite{Sm} classified simply connected, spin five-manifolds and a few years later Barden~\cite{B} completed that classification by including
non-spin manifolds as well.
The following constructions are given in~\cite{B}: Let $S^3 \times S^2$, $S^3 \widetilde{\times} S^2$ be the two $S^3$-bundles over $S^2$ and
$A = S^2 \times D^3$, $B = S^2 \widetilde{\times} D^3$ be the two $D^3$-bundles over $S^2$ with boundaries $\partial A = S^2 \times S^2$ and $\partial B = S^2\widetilde{\times} S^2 \cong \CP^2 \# \overline{\CP^2}$ respectively. As in dimension four, we don't need to assume that the structure group of oriented $S^3$-bundles over $S^2$ is linear, because the inclusion $\SO(4) \hookrightarrow \Diff^+(S^3)$ is a homotopy equivalence, by the proof of the Smale conjecture (cf. Hatcher~\cite{Ha}).

A prime, simply connected, closed, spin five-manifold is either $M_1 := S^5$ or $M_\infty := S^2 \times S^3$, if its integral homology groups have no
torsion. If its second homology group is torsion, then is
\begin{align*}
M_k := (A \#_{\partial} A) \cup_{\overline{f_k}} (\overline{A \#_{\partial} A}), 1 < k < \infty,
\end{align*}
where $A \#_{\partial} A$ denotes the boundary connected sum of two copies of $A$ and $f_k$ is an orientation preserving diffeomorphism on
$\partial (A \#_{\partial} A) = (S^2 \times S^2) \# (S^2 \times S^2)$, realizing a certain isomorphism
\begin{align*}
 H_2(f_k;\Z) \colon H_2 (\partial (A \#_{\partial} A); \Z) \stackrel{\cong} \longrightarrow H_2 (\partial (A \#_{\partial}A);\Z).
\end{align*}
The second integral homology of $M_k$ is given by $H_2 (M_k; \Z) = \Z_{k} \oplus \Z_{k}$, $1 < k < \infty$; cf.~\cite{B}. For details on the
$f_k$'s construction see~\cite{Wall}.

A prime, simply connected, closed, non-spin five-manifold with torsion-free integral homology is the non-trivial $S^3$-bundle over $S^2$, denoted by
$X_\infty$. A simply connected, closed, non-spin five-manifold with torsion second integral homology is either
\begin{itemize}
 \item $X_{-1} := B \cup_{\overline{g_{-1}}} \overline{B}$,
where $g_{-1}$ is an orientation preserving diffeomorphism on $\partial B$, realizing a certain isomorphism
\begin{align*}
 H_2 (g_{-1};\Z) \colon H_2 (\partial B; \Z) \stackrel{\cong} \longrightarrow H_2 (\partial B; \Z),
\end{align*}
or
\item $X_m := (B \#_{\partial} B) \cup_{\overline{g_m}} (\overline{B \#_{\partial} B})$, $1 \leq m < \infty$, where $B \#_{\partial} B$ denotes the
boundary connected sum of two copies of $B$ and $g_m$ is an orientation preserving diffeomorphism on
$\partial (B \#_{\partial} B) = (S^2 \widetilde{\times} S^2) \# (S^2 \widetilde{\times} S^2)$, realizing a certain isomorphism
\begin{align*}
 H_2(g_m;\Z) \colon H_2 (\partial (B \#_{\partial} B); \Z) \stackrel{\cong} \longrightarrow H_2 (\partial(B \#_{\partial} B);\Z).
\end{align*}
\end{itemize}
Except $X_1 \cong X_{-1} \# X_{-1}$, each $X_m$ is prime. Their second integral homology group is given by $H_2 (X_m; \Z) = \Z_{2^m} \oplus
\Z_{2^m}$, $0 < m < \infty$ and $H_2 (X_{-1}; \Z) = \Z_2$; cf.~\cite{B}. Also, $X_{-1}$ is the Wu manifold $\SU (3) / \SO(3)$; cf.~\cite{Do,Wu}. Finally,
we set $X_0 := S^5$ which is spin and has torsion-free homology.

According to the above data, we have the following classification theorem for simply connected five-manifolds.

\begin{thm}[Barden~\cite{B}]\label{t:Barden}
Every simply connected, closed five-manifold $M$ is diffeomorphic to a connected sum $M_{k_1} \# ... \# M_{k_l}
\# X_m$, where $-1 \leq m \leq \infty$, $l \geq 0$, $k_1 > 1$ and $k_i$ divides $k_{i+1}$ or $k_{i+1} = \infty$.
\end{thm}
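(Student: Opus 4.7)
The plan is to follow Barden's original handle-theoretic approach, refining Smale's earlier classification of the spin case. First I would establish a normal form for handle decompositions: using the Hurewicz isomorphism $\pi_2(M)\cong H_2(M;\Z)$ together with Smale's handle cancellation lemma, and the hypothesis $\pi_1(M)=1$, I would show that $M$ admits a Morse function with only one $0$-handle, some $2$-handles, some $3$-handles, and one $5$-handle; the $1$-handles cancel against $2$-handles by simple connectivity, and by Poincar\'e duality (turning the decomposition upside down) the $4$-handles cancel against $3$-handles.

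Second, I would identify the primitive building blocks. Each $2$-handle, together with its normal framing, glued onto the $0$-handle produces either the trivial $D^3$-bundle $A=S^2\times D^3$ or the non-trivial bundle $B=S^2\widetilde{\times} D^3$, the choice being dictated by whether the corresponding class in $H_2(M;\Z)$ is characteristic, i.e.\ detected by $w_2$. Iterating boundary connected sums of these two disk bundles and capping off with the $3$-handles via an orientation-preserving diffeomorphism of the boundary $\#(S^2\times S^2)$ or $\#(S^2\widetilde\times S^2)$ produces precisely the model manifolds $M_k$ (spin) and $X_m$ (non-spin) listed in the statement, and computing their homology from the Mayer--Vietoris sequence of the gluing recovers $H_2(M_k;\Z)=\Z_k\oplus\Z_k$ and $H_2(X_m;\Z)=\Z_{2^m}\oplus\Z_{2^m}$.

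Third, I would prove a splitting lemma: any simply connected, closed $5$-manifold decomposes as an interior connected sum of such primitive pieces. The idea is to use the handle decomposition from Step~1 to locate an embedded $S^4\subset M$ separating off one $2$-handle/$3$-handle pair at a time, and then to argue by induction on the number of generators of $H_2$. Putting $H_2(M;\Z)$ into elementary-divisor form $\Z_{k_1}\oplus\cdots\oplus\Z_{k_l}\oplus\Z^r$ with $k_1>1$ and $k_i\mid k_{i+1}$ prescribes the sequence of spin summands, while the class of $w_2\in H^2(M;\Z/2)$ is absorbed into a single non-spin factor $X_m$ (with $m=0$ corresponding to the spin case and $m=\infty$ to the torsion-free non-spin case).

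The hard step will be Step~3, specifically the uniqueness of the decomposition and the precise identification of which gluing diffeomorphisms of $\#(S^2\times S^2)$ or $\#(S^2\widetilde\times S^2)$ yield non-diffeomorphic manifolds. This requires a delicate analysis of the action of the mapping class groups of these boundaries on their second homology, in the spirit of Wall's results realizing automorphisms of the intersection form by diffeomorphisms, in order to match the admissible gluings $f_k$ and $g_m$ to the divisibility pattern $k_i\mid k_{i+1}$ and the range $-1\le m\le\infty$. This combinatorial-geometric matching, rather than any single isolated lemma, is the substantive content of Barden's theorem.
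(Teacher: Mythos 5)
This statement is not proved in the paper at all: it is Barden's classification theorem, imported verbatim from \cite{B}, and the surrounding text merely assembles the definitions of the building blocks $M_k$ and $X_m$ that appear in it. There is therefore no in-paper argument to compare yours against; your proposal has to be judged as an independent reconstruction of Barden's original proof.

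As a roadmap it is faithful to that original strategy: Barden does reduce to a handle decomposition with handles of index $0,2,3,5$ only (although the elimination of $1$- and $4$-handles proceeds by \emph{trading} a $1$-handle for a $3$-handle in the simply connected setting, not by direct cancellation against $2$-handles as you phrase it); the framing in $\pi_1(\SO(3))=\Z_2$ does distinguish $A=S^2\times D^3$ from $B=S^2\widetilde{\times}D^3$; and the complete invariant is the pair consisting of $H_2(M;\Z)$ and the homomorphism $w_2\colon H_2(M;\Z)\to\Z_2$, considered up to automorphism. But what you have written is a plan, not a proof. Everything that makes the theorem true is concentrated in the steps you explicitly defer: the existence of embedded separating $4$-spheres realizing the connected-sum splitting compatibly with the elementary-divisor decomposition of $H_2$; the realization of the required automorphisms of $(H_2,w_2)$ by diffeomorphisms of $\#(S^2\times S^2)$, respectively $\#(S^2\widetilde{\times}S^2)$, which is where Wall's results on diffeomorphisms of $4$-manifolds enter; and the verification that the list $M_{k_1}\#\cdots\#M_{k_l}\#X_m$ with the divisibility constraints $k_i\mid k_{i+1}$ and $-1\le m\le\infty$ is both exhaustive and without repetition. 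Your outline correctly identifies the architecture of Barden's argument but does not supply the argument itself; for the purposes of this paper, citing \cite{B}, as the author does, is the appropriate and intended treatment of this statement.
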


Since every summand, except $X_1 \cong X_{-1} \# X_{-1} $, is prime, we may refer to the above decomposition of $M$ as prime decomposition. Moreover, a
summand $X_{m \neq 0}$ exists if and only if $M$ is not spin.

For a simply connected, closed five-manifold with torsion-free homology the following particular classification result holds:

\begin{thm}[Smale~\cite{Sm}, Barden~\cite{B}]\label{t:BaSm}
Every simply connected, closed five-manifold $M$ with torsion-free second homology group $H_2(M;\Z)=\Z^k$ is (up to diffeomorphism)
\begin{enumerate}
\item either a connected sum $\#_k (S^3 \times S^2)$, for $M$ spin, or
\item a connected sum $\#_{k-1} (S^3 \times S^2) \# (S^3 \widetilde{\times} S^2)$, for $M$ non-spin.
\end{enumerate}
\end{thm}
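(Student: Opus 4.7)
The plan is to derive Theorem~\ref{t:BaSm} as a direct corollary of Barden's full classification (Theorem~\ref{t:Barden}), reducing it to a short bookkeeping argument on the summands that can appear when $H_2$ is torsion-free. I would begin by writing $M \cong M_{k_1}\#\cdots\# M_{k_l}\# X_m$ in the canonical form supplied by Theorem~\ref{t:Barden}, and then invoke the fact that $H_2$ splits as a direct sum over connected summands of simply connected closed manifolds of dimension at least three (which is immediate from Mayer--Vietoris, since the gluing sphere $S^4$ has trivial $H_1$ and $H_2$).

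The second step is to consult the homology tables recalled just before Theorem~\ref{t:Barden}: the only prime summands with torsion-free second homology are $M_1 = X_0 = S^5$ (contributing $0$), $M_\infty = S^2\times S^3$ (contributing $\Z$) and $X_\infty = S^3 \widetilde{\times} S^2$ (contributing $\Z$); every other allowed prime summand carries nontrivial torsion in $H_2$. Combined with the previous splitting, the hypothesis $H_2(M;\Z) \cong \Z^k$ forces $k_i = \infty$ for every $i$ and $m \in \{0,\infty\}$ in the Barden decomposition of $M$.

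To finish, I match ranks and record the spin type. If $m = 0$ the $X$-summand is a sphere, so $M \cong \#_l(S^3 \times S^2)$ with $l = k$, and $M$ is spin because each factor $S^3\times S^2$ is spin. If $m = \infty$, then $M \cong \#_l(S^3 \times S^2) \# (S^3 \widetilde{\times} S^2)$ with $l+1 = k$, and the presence of the non-spin summand $X_\infty$ (which has $w_2 \neq 0$) forces $M$ to be non-spin, in agreement with the remark that a summand $X_{m \neq 0}$ appears precisely when $M$ is non-spin. These are the two alternatives in the statement.

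The main obstacle is located upstream: all the substantive content is already packaged inside Theorem~\ref{t:Barden}, and once that classification is accepted the present statement is essentially a tabulation. If one instead wanted a direct proof, as in Smale's original treatment of the spin case, the hard part would be a careful handle decomposition of a simply connected closed $5$-manifold with torsion-free $H_2$: using simple connectivity to cancel all $1$- and $4$-handles, and then recognising the remaining $2$- and $3$-handles as assembling into a connected sum of $S^3$-bundles over $S^2$, with the spin/non-spin dichotomy governed by the framing of the $2$-handle attaching maps.
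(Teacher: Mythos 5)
Your derivation is correct. The paper states Theorem \ref{t:BaSm} as a cited result of Smale and Barden without giving a proof, and your reduction to Theorem \ref{t:Barden} --- splitting $H_2$ over the connected summands and observing that $M_\infty = S^2\times S^3$, $X_\infty = S^3\widetilde{\times}S^2$ and $X_0=S^5$ are the only admissible prime summands with torsion-free $H_2$, with the spin dichotomy read off from the presence of $X_\infty$ --- is exactly the intended bookkeeping; the only cosmetic point is that $M_1=S^5$ never occurs in Barden's normal form (since $k_1>1$), so it need not be listed among the possible summands.
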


\begin{rem}\label{r:Wall2}
 By a theorem of Wall~\cite{Wall}, a connected sum $\#_{k-1} (S^3 \times S^2) \# (S^3 \widetilde{\times} S^2)$ is diffeomorphic to $\#_k
(S^3 \widetilde{\times} S^2)$. In Remark \ref{r:Wall1}, we refer to the corresponding statement in dimension four. This is also true for all oriented
$S^{n-2}$-bundles over $S^2$, $n \geq 6$, with structure group $\SO(n-1)$.
\end{rem}

The above classification results will be our guide to prove Theorem \ref{t:4and5-mfds} for five-manifolds. First, we can prove this theorem for every
simply connected five-manifold with torsion-free second homology group:

\begin{prop}\label{p:torsionfree}
  Let $M$ be a simply connected, closed five-manifold with torsion-free second homology group $H_2(M;\Z)=\Z^k$. Then $M$ admits a branched double covering
by the product $S^1 \times (\#_k S^1 \times S^3)$.
\end{prop}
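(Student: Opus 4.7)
The plan is to reduce the proposition to the two previously established branched covering constructions, according to whether $M$ is spin or non-spin. By Theorem \ref{t:BaSm}, the hypothesis $H_2(M;\Z)=\Z^k$ forces $M$ to be diffeomorphic to either $\#_k(S^3\times S^2)$ (in the spin case) or $\#_{k-1}(S^3\times S^2)\#(S^3\widetilde{\times}S^2)$ (in the non-spin case). Using Remark \ref{r:Wall2} (the theorem of Wall saying a connected sum of copies of the trivial $S^3$-bundle over $S^2$ with one copy of the non-trivial bundle is diffeomorphic to a connected sum of copies of the non-trivial bundle alone), the non-spin case collapses to $\#_k(S^3\widetilde{\times}S^2)$.

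Having reduced to these two model targets, I invoke the constructions already in hand. For the non-spin target $\#_k(S^3\widetilde{\times}S^2)$, I apply Theorem \ref{t:main} with $n=5$, which directly provides a branched double covering
\begin{align*}
S^1\times(\#_k S^3\times S^1)\longrightarrow \#_k(S^3\widetilde{\times}S^2).
\end{align*}
For the spin target $\#_k(S^3\times S^2)$, I apply Theorem \ref{t:general} with the parameter manifold taken to be $S^3$, which yields a (necessarily $\pi_1$-surjective) branched double covering
\begin{align*}
S^1\times(\#_k S^3\times S^1)\longrightarrow \#_k(S^3\times S^2).
\end{align*}
In both cases the domain is exactly $S^1\times(\#_k S^1\times S^3)$, as required.

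There is essentially no obstacle beyond correctly matching the classification with the two construction theorems: the substantive work has been done in Section \ref{s:construction}, and the substantive input from dimension theory is the fact that the structure group of oriented $S^3$-bundles over $S^2$ may be taken to be $\SO(4)$ by the proof of the Smale conjecture (Hatcher), which justifies applying Theorem \ref{t:main} in the non-spin case. The only point that warrants a brief verification is that the non-spin classification result indeed lands one in the target class of Theorem \ref{t:main} after Remark \ref{r:Wall2}, so that no separate argument is needed for the single twisted summand versus $k$ twisted summands.
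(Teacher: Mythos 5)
Your proof is correct and is essentially identical to the paper's own argument: reduce via Theorem \ref{t:BaSm} and Remark \ref{r:Wall2} to the two model targets $\#_k(S^3\times S^2)$ and $\#_k(S^3\widetilde{\times}S^2)$, then apply Theorem \ref{t:general} (with parameter manifold $S^3$) and Theorem \ref{t:main} (with $n=5$) respectively. Your added remark about Hatcher's theorem justifying the linearity of the structure group is the same point the paper makes earlier in the section.
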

\begin{proof}
 We know by Theorem \ref{t:BaSm} and by Remark \ref{r:Wall2} that such $M$ is diffeomorphic to a connected sum of copies of the twisted product $S^2
\widetilde{\times} S^3$ or of the trivial bundle $S^3 \times S^2$. Now Theorem \ref{t:main} and Theorem \ref{t:general} imply the proof, for $\#_k (S^2
\widetilde{\times} S^3)$ and $\#_k (S^3 \times S^2)$ respectively.
\end{proof}

In the remainder of this section we complete the proof of Theorem \ref{t:4and5-mfds} for five-manifolds. We observe that $S^2 \times S^3$ is a branched
double covering of the other two prime, simply connected, closed five-manifolds with torsion-free homology, namely of $S^2 \widetilde{\times} S^3$ and
$S^5$. Indeed:
\begin{itemize}
\item A branched double covering $S^2 \times S^3 \longrightarrow S^2 \widetilde{\times} S^3$ is obtained by pulling back the $S^3$-bundle map $S^2
\widetilde{\times} S^3 \longrightarrow S^2$ by a branched double covering $S^2 \longrightarrow S^2$ (recall that $S^3$-bundles over $S^2$ are classified by
$\pi_1(\SO(4)) = \Z_2$).
\item $S^2 \times S^3$ is a branched double covering of $S^5$, by pulling back the $S^1$-bundle map $S^5 \longrightarrow
\CP^2$ by a branched double covering $S^2 \times S^2 \longrightarrow \CP^2$; cf.~\cite{Gibl,DuL}. (A branched double covering $S^2 \times S^2 \longrightarrow \CP^2$ is obtained as the quotient map for the involution $(x,y) \mapsto (y,x)$ of $S^2\times S^2$.)
\end{itemize}
As we shall see below, every simply connected, closed
five-manifold $M$ with torsion second integral homology group admits a non-zero degree map by $S^2 \times S^3$, given by the composition of a branched
double covering $S^2 \times S^3 \longrightarrow S^5$ with a dominant map $S^5 \longrightarrow M$ whose degree depends on $H_2(M;\Z)$. The latter map will
be obtained by applying the Hurewicz theorem modulo a Serre class of groups.

A Serre class of abelian groups is a non-empty class $\mathcal{C}$ of abelian groups such that for every exact sequence
$A \longrightarrow B \longrightarrow C$, where $A, C \in \mathcal{C}$, then $B\in\mathcal{C}$. A Serre class $\mathcal{C}$ is called a ring of
abelian groups if it is closed under the tensor and the torsion product operations. Moreover, $\mathcal{C}$ is said to be acyclic if for every aspherical
space $X$ with $\pi_1 (X) \in \mathcal{C}$, then the homology groups $H_i (X;\Z) \in \mathcal{C}$, for all $i > 0$. We say that two abelian groups $A, B$ are
isomorphic modulo $\mathcal{C}$ if there is a homomorphism between $A$ and $B$ whose kernel and cokernel belong to $\mathcal{C}$.

The Hurewicz theorem modulo a Serre class states the following:

\begin{thm}[Serre~\cite{Se}]\label{t:Serre}
 Let $X$ be a simply connected space and $\mathcal{C}$ be an acyclic ring of abelian groups. Then the following are equivalent:
 \begin{itemize}
  \item $\pi_i (X) \in \mathcal{C}$, for all $1 < i < n$,
  \item $H_i (X) \in \mathcal{C}$, for all $1 < i < n$.
 \end{itemize}
 Moreover, each of the above statements implies that the Hurewicz homomorphism $h \colon \pi_i(X) \longrightarrow H_i (X)$ is an isomorphism modulo
$\mathcal{C}$ for all $i \leq n$.
\end{thm}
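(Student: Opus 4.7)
The plan is to prove Serre's theorem by induction on $n$, the base case $n=2$ reducing to the classical Hurewicz theorem, since simple connectivity of $X$ forces $\pi_2(X) \cong H_2(X)$ and the lower-dimensional conditions are vacuous. The inductive step rests on decomposing $X$ via its Postnikov tower and computing the mod-$\mathcal{C}$ homology of Eilenberg--MacLane spaces by repeated use of the Serre spectral sequence. Throughout, I would exploit the three structural properties of $\mathcal{C}$: closure under subobjects, quotients, and extensions (the Serre-class property), closure under $\otimes$ and $\mathrm{Tor}$ (the ring property), and closure under the homology functor on aspherical spaces with fundamental group in $\mathcal{C}$ (acyclicity).

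The first and technically central step is the auxiliary lemma that for $\pi \in \mathcal{C}$ one has $H_i(K(\pi,m);\Z) \in \mathcal{C}$ for every $m \geq 1$ and every $i > 0$. I would prove this by induction on $m$, with the base $m=1$ given directly by acyclicity and the inductive step run through the Serre spectral sequence of the path-loop fibration $K(\pi,m) \to PK(\pi,m+1) \to K(\pi,m+1)$, whose total space is contractible. The ring hypothesis guarantees that each $E^2$-term $H_p(K(\pi,m+1);H_q(K(\pi,m);\Z))$ lies in $\mathcal{C}$ when $q > 0$, and contractibility of the path space allows one to deduce inductively that the remaining edge terms $H_p(K(\pi,m+1);\Z)$ lie in $\mathcal{C}$ for $p > 0$.

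With the Eilenberg--MacLane lemma in hand, the main induction proceeds as follows. Assuming (i), let $X \to X_{n-1}$ be the $(n-1)$-st Postnikov section, with homotopy fiber $F$. Then $F$ is $(n-1)$-connected with $\pi_n(F) \cong \pi_n(X)$, while $X_{n-1}$ is assembled from a finite tower of principal fibrations whose fibers are the Eilenberg--MacLane spaces $K(\pi_i(X),i)$ for $1 < i < n$. The auxiliary lemma together with a Serre spectral sequence argument at each stage of this tower yields $H_i(X_{n-1};\Z) \in \mathcal{C}$ for all $i > 0$. The Serre spectral sequence of $F \to X \to X_{n-1}$ then identifies $H_i(X;\Z)$ with $H_i(F;\Z)$ modulo $\mathcal{C}$ for $i \leq n$, and combining this with the classical Hurewicz isomorphism $\pi_n(F) \cong H_n(F;\Z)$ yields both (ii) and the mod-$\mathcal{C}$ Hurewicz statement in degree $n$. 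The reverse implication (ii) $\Rightarrow$ (i) is symmetric, run through the same spectral sequence comparison. The main obstacle throughout — and the place where all three assumptions on $\mathcal{C}$ are used simultaneously — is the bookkeeping required to pass from an $E^2$-page lying in $\mathcal{C}$ to an abutment lying in $\mathcal{C}$ via successive subquotients (needing the Serre-class property) and extensions in the finite filtration on each total degree.
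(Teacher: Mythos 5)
This theorem is not proved in the paper: it is quoted verbatim from Serre's article \cite{Se} as a known classical result, so there is no internal argument to compare yours against. Judged on its own, your outline is the standard modern proof (Eilenberg--MacLane computation plus Postnikov/Whitehead tower plus Serre spectral sequence) and is essentially correct. Two points deserve more care than the sketch gives them. First, in the induction on $m$ for $K(\pi,m)$, the $E^2$-term $H_p(K(\pi,m+1);H_q(K(\pi,m);\Z))$ splits by universal coefficients into $H_p\otimes H_q$ and a $\mathrm{Tor}$ term; with only the stated ring property (closure under $\otimes$ and $\mathrm{Tor}$ of two groups \emph{in} $\mathcal{C}$) you cannot conclude these lie in $\mathcal{C}$ unless the relevant homology of the base is already known to lie in $\mathcal{C}$, so the argument must be a double induction (on $m$ and on the base degree $p$), with the $p-r=0$ column handled by $\Z\otimes B\cong B$; alternatively invoke Serre's stronger axiom that $A\in\mathcal{C}$ forces $A\otimes B\in\mathcal{C}$ for arbitrary $B$. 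Second, in the final step you identify $H_n(X)$ with $H_n(F)$ modulo $\mathcal{C}$ via the edge homomorphism and then apply the classical Hurewicz theorem to the $(n-1)$-connected fiber $F$; to conclude the mod-$\mathcal{C}$ Hurewicz statement for $X$ itself you must check that this composite agrees with the Hurewicz map of $X$, which follows from naturality of the Hurewicz homomorphism along $F\to X$ together with the isomorphism $\pi_n(F)\cong\pi_n(X)$. Both points are routine bookkeeping, so your proposal is sound.
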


As a consequence of this version of the Hurewicz theorem, every simply connected, closed $n$-dimensional manifold $M$ whose homology groups $H_i (M;\Z)$
are all $k$-torsion, for $0 < i < n$, is minimal with respect to the domination relation (i.e. it is dominated by every other manifold).

\begin{cor}[Ruberman~\cite{Ru}]\label{c:Ru}
 Let $M$ be a simply connected, closed $n$-dimensional manifold with $k$-torsion homology groups $H_i(M;\Z)$, for $0 < i < n$ and some integer $k$. Then
the image of the Hurewicz homomorphism $\pi_n (M) \longrightarrow H_n (M)$ is given by $k^r \Z$ for some $r$. In particular, there is a map $S^n
\longrightarrow M$ of degree $k^r$.
\end{cor}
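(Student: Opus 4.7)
The plan is to apply the mod-$\mathcal{C}$ Hurewicz theorem just quoted for a well-chosen Serre class $\mathcal{C}$. I would take $\mathcal{C}$ to be the class of finite abelian groups all of whose elements have order dividing a power of $k$, i.e.\ finite $k$-power-torsion groups. This is a standard example of an acyclic ring of abelian groups: it is closed under subgroups, quotients, extensions, tensor and torsion products, and the homology of $K(A,1)$ for $A\in\mathcal{C}$ remains in $\mathcal{C}$.

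Since $M$ is closed, its homology is finitely generated, so the hypothesis that each $H_i(M;\Z)$ with $0<i<n$ is $k$-torsion places these groups in $\mathcal{C}$. Serre's theorem applied to the simply connected space $M$ then gives $\pi_i(M)\in\mathcal{C}$ for $1<i<n$, and moreover says that the Hurewicz homomorphism $h\colon \pi_n(M)\to H_n(M;\Z)$ is an isomorphism modulo $\mathcal{C}$; in particular, its cokernel lies in $\mathcal{C}$.

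Simple connectedness of $M$ implies orientability, so $H_n(M;\Z)=\Z$. The image of $h$ is then a subgroup $m\Z\subset\Z$ with $\Z/m\Z\in\mathcal{C}$, which forces every prime factor of $m$ to be a prime factor of $k$, so that $m$ divides $k^r$ for some $r$; consequently $k^r$ itself lies in the image of $h$. Choosing any $\alpha\in\pi_n(M)$ with $h(\alpha)=k^r$ and representing $\alpha$ by a based map $S^n\to M$ produces the advertised map of degree $k^r$.

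The only genuinely non-formal step is the selection and verification of the Serre class $\mathcal{C}$; once Serre's theorem has been invoked, the identification of the image of $h$ and the construction of the degree-$k^r$ map reduce to elementary facts about subgroups of $\Z$ and the definition of degree via the Hurewicz map $\pi_n(S^n)\to H_n(S^n;\Z)=\Z$.
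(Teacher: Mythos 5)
Your argument is correct and is essentially the one the paper intends: it deduces the corollary directly from the quoted mod-$\mathcal{C}$ Hurewicz theorem, taking $\mathcal{C}$ to be the (acyclic ring of) finite abelian groups of $k$-power-torsion, exactly as in Ruberman's treatment. Your observation that the image is a priori only some $m\Z$ with $m$ dividing a power of $k$ (rather than literally $k^r\Z$) is a slight sharpening of the statement, and it still yields the degree-$k^r$ map that is actually used.
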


We have now shown that every five-manifold $M$ which is a connected sum of copies of $M_k$ and $X_m$, where $1 \leq k < \infty$ and $-1 \leq m < \infty$,
admits a branched double covering by the product $S^2 \times S^3$, followed by a map $S^5 \longrightarrow M$ whose degree is determined by $H_2(M;\Z)$,
being a power of the least common multiple of the torsion second integral homology groups $H_2(M_k;\Z)$ and $H_2(X_m;\Z)$. As we have seen
above, the non-trivial $S^3$-bundle over $S^2$
admits a branched double covering by the product $S^2 \times S^3$. We now want to combine these maps, together with our constructions from Section
\ref{s:construction}, to obtain domination by products for every simply connected, closed five-manifold.

We remark that it is not generally possible to obtain a non-zero degree map $M_1 \# M_2 \longrightarrow N_1 \# N_2$ by connected summing any two non-zero
degree maps between closed $n$-dimensional manifolds, $f_i \colon M_i \longrightarrow N_i$, $i = 1,2$.  A first obstruction is that the preimage
$f_i^{-1}(D_i^n)$ of a removed $n$-disk $D_i^n$ from $N_i$, is not generally an $n$-disk in $M_i$. Moreover, even if we can find such a disk, we need the
degrees of $f_1$ and $f_2$ to be equal in order to connected sum the domains and the targets, otherwise we cannot paste those maps along the gluing sphere
(recall that maps between spheres are classified by their degrees).

If the above two constraints are satisfied, then we can paste those $f_i$ together to obtain a new map $M_1 \# M_2 \longrightarrow N_1 \# N_2$ of the same
non-zero degree. The $\pi_1$-surjectivity of the $f_i$ is a sufficient condition to overcome the first obstacle (cf.~\cite{St}).

\begin{lem}[Derbez-Sun-Wang~\cite{DSW}]\label{l:consum}
 Let $M_i$, $N_i$ be connected, oriented, closed $n$-dimensional manifolds, $n \geq 3$, and assume that for $i = 1,...,k$ there exist $\pi_1$-surjective
maps $M_i \longrightarrow N_i$ of non-zero degree $d$. Then there is a $\pi_1$-surjective map $\#_{i=1}^{k} M_i \longrightarrow \#_{i=1}^{k} N_i$ of degree
$d$.
\end{lem}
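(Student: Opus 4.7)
The plan is to reduce to the case $k=2$ by an obvious induction (once two maps of degree $d$ have been glued to a map of degree $d$ on the connected sums, iterate), so the substance lies in the binary case. Fix $\pi_1$-surjective maps $f_i\colon M_i\to N_i$ of degree $d\neq 0$ for $i=1,2$.

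The heart of the argument is to put each $f_i$ into a normal form adapted to the connected-sum construction. Concretely, I would show that $f_i$ is homotopic to a map $f_i'$ for which there exist embedded balls $B_i\subset N_i$ and $\widetilde B_i\subset M_i$ such that $(f_i')^{-1}(B_i)=\widetilde B_i$, the restriction $f_i'|_{\widetilde B_i}\colon\widetilde B_i\to B_i$ is a degree $d$ map of balls, and $f_i'(M_i\setminus\widetilde B_i)\subset N_i\setminus B_i$. To produce such $f_i'$, pick a regular value $p_i\in N_i$ of $f_i$; the finite preimage consists of signed points $x_1,\ldots,x_m$ whose signs sum to $d$. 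For a pair of opposite-signed preimages $x_j,x_l$ pick an embedded arc $\alpha\subset M_i$ from $x_j$ to $x_l$; since $f_i$ is $\pi_1$-surjective we may modify $\alpha$ by precomposition with loops in $M_i$ so that $f_i\circ\alpha$ becomes null-homotopic in $N_i$, and a Whitney-style cancellation (valid since $n\geq 3$) then removes the pair from the preimage of $p_i$. After finitely many such cancellations we reduce to $|d|$ preimages of the same sign. A completely analogous use of $\pi_1$-surjectivity lets us choose disjoint arcs $\gamma_2,\ldots,\gamma_{|d|}$ from $x_1$ to the remaining preimages whose $f_i$-images are null-homotopic, and a regular-neighbourhood argument then allows us to homotope $f_i$ so that the entire tree $x_1\cup\gamma_2\cup\cdots\cup\gamma_{|d|}$ is thickened to a ball $\widetilde B_i$ mapping degree $d$ onto a ball $B_i$ around $p_i$, with the remainder pushed off $B_i$ by general position.

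Once the $f_i'$ are in normal form, I would form the connected sums
\begin{equation*}
M_1\#M_2=(M_1\setminus\operatorname{int}\widetilde B_1)\cup_{\phi}(M_2\setminus\operatorname{int}\widetilde B_2),\qquad N_1\#N_2=(N_1\setminus\operatorname{int}B_1)\cup_{\psi}(N_2\setminus\operatorname{int}B_2),
\end{equation*}
and glue $f_1'$ to $f_2'$ across the identification spheres. The restrictions $f_i'|_{\partial\widetilde B_i}\colon S^{n-1}\to S^{n-1}$ are both of degree $d$, and since $n-1\geq 2$ Hopf's theorem classifies such maps up to homotopy by their degree; hence, after choosing the gluing diffeomorphisms $\phi,\psi$ compatibly and performing a boundary homotopy, the two maps agree on the gluing sphere and combine to a map $F\colon M_1\#M_2\to N_1\#N_2$. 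Since the degree is computed on the regular value $p_1$ (respectively $p_2$) which lies in one summand of the target, and since on the other summand $F$ avoids that point, $\deg F=d$. Finally, because $n\geq 3$ the gluing spheres $S^{n-1}$ are simply connected, so van Kampen yields $\pi_1(M_1\#M_2)\cong\pi_1(M_1)*\pi_1(M_2)$ and similarly on the target; $\pi_1$-surjectivity of $F$ follows from that of each $f_i'$.

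The main obstacle is the normal-form step, i.e. reducing the preimage of a small ball to a single ball mapping with concentrated degree. Both ingredients used there—Whitney-type cancellation of oppositely-signed preimages and the merging of equal-signed preimages—require producing arcs in $M_i$ whose $f_i$-image is null-homotopic in $N_i$, which is precisely where $\pi_1$-surjectivity (and $n\geq 3$, so that generic arcs are embedded and tubular neighbourhoods behave well) enters essentially. The remaining steps are formal: Hopf's classification to glue the boundary maps and van Kampen to conclude $\pi_1$-surjectivity.
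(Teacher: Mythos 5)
Your proposal is correct and follows exactly the strategy the paper indicates: the paper does not prove this lemma itself (it quotes it from Derbez--Sun--Wang and only names the two obstructions --- making the preimage of a ball a ball, and matching boundary degrees), and your normal-form argument via $\pi_1$-surjectivity plus Hopf's classification of sphere maps is the standard way those obstructions are overcome. The only cosmetic quibble is that pushing the complement of $\widetilde B_i$ off $B_i$ is really the deformation retraction of $N_i\setminus\{p_i\}$ onto $N_i\setminus\operatorname{int}B_i$ rather than ``general position,'' but this does not affect the argument.
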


For simply connected targets the $\pi_1$-surjectivity condition is automatically satisfied. Moreover, $S^2 \times S^3$ and $S^5$ admit self-maps of any
degree, which implies that the domination for every minimal summand by $S^5$ and for every $S^3$-bundle over $S^2$ by $S^2 \times S^3$ can be done by maps
of the same degree. We therefore obtain the following statement which also completes the proof of Theorem \ref{t:4and5-mfds} for five-manifolds:

\begin{thm}\label{t:5-mfds}
 Let $M$ be a simply connected, closed five-manifold so that $H_2(M;\Z)$ has rank $k$. Then $M$ admits a non-zero degree map by the product $S^1 \times
(\#_k S^1
\times S^3)$, which is given by the composition of a branched double covering $S^1 \times (\#_k S^1 \times S^3) \longrightarrow \#_k(S^2 \times S^3)$ with
a map $\#_k(S^2 \times S^3) \longrightarrow M$ whose degree is determined by the torsion of $H_2(M;\Z)$.
\end{thm}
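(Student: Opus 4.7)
The plan is to decompose $M$ by Barden's classification, construct a dominant map to each prime summand, and glue these together using the connected-sum lemma of Derbez--Sun--Wang. By Theorem \ref{t:Barden}, one may write
$$
M \cong (\#_a M_\infty) \# T_1 \# \cdots \# T_q \# X_m,
$$
where each $T_j$ is a torsion $M_{k_j}$ (so $1 < k_j < \infty$), and $X_m$ is the non-spin summand (omitted if $M$ is spin). The rank $k$ of $H_2(M;\Z)$ equals $a$ if $m \neq \infty$ and $a+1$ if $m = \infty$, since $M_\infty$ and $X_\infty$ are the only prime summands contributing a free factor to second homology.

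For each prime summand I would construct a dominant, $\pi_1$-surjective map from either $S^2 \times S^3$ or $S^5$. For each $M_\infty$ summand the identity on $S^2 \times S^3$ works, and for $X_\infty$ (when $m=\infty$) I would take the branched double covering $S^2 \times S^3 \to X_\infty$ obtained by pulling back the bundle projection $X_\infty \to S^2$ along a branched double cover $S^2 \to S^2$, as in the proof of Proposition \ref{p:torsionfree}. For every torsion summand---the $T_j$'s and $X_m$ with $-1 \leq m < \infty$, $m \neq 0$---Corollary \ref{c:Ru} produces a map $S^5 \to T_j$, resp. $S^5 \to X_m$, whose degree is a power of $k_j$, resp. of $2$ or $2^m$, i.e. a power of the least common multiple of the orders of the torsion subgroups of $H_2(M;\Z)$.

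To glue these pieces via Lemma \ref{l:consum}, all of them must share a common degree. Let $D$ be any common multiple of the individual degrees above; using that both $S^2 \times S^3$ and $S^5$ admit self-maps of arbitrary non-zero degree, I would pre-compose each piece by such a self-map so that every piece has degree $D$. The key combinatorial observation is that in dimension five $N \cong N \# S^5$, so the domain can be enlarged trivially by $q$ sphere summands:
$$
\bigl(\#_k (S^2 \times S^3)\bigr) \# \bigl(\#_q S^5\bigr) \;=\; \#_k (S^2 \times S^3).
$$
Since every target summand is simply connected, all pieces are automatically $\pi_1$-surjective, so Lemma \ref{l:consum} yields a $\pi_1$-surjective map $\#_k (S^2 \times S^3) \to M$ of degree $D$, where $D$ depends only on the torsion of $H_2(M;\Z)$.

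Finally, composing with the $\pi_1$-surjective branched double covering
$$
S^1 \times (\#_k S^1 \times S^3) \longrightarrow \#_k (S^2 \times S^3)
$$
furnished by Theorem \ref{t:general} applied to $S^3$ completes the construction. The main technical obstacle is the bookkeeping that aligns the degrees of all pieces simultaneously while absorbing the $q$ torsion summands of $M$ into trivial $S^5$ pieces of the domain without altering its diffeomorphism type; this is precisely what Corollary \ref{c:Ru}, the existence of arbitrary-degree self-maps of $S^5$ and $S^2\times S^3$, and the triviality of $S^5$ under connected sum in dimension five jointly accomplish.
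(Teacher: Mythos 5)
Your proposal is correct and follows essentially the same route as the paper: Barden's decomposition, Ruberman's corollary (via the Serre--Hurewicz theorem) for the torsion summands, degree alignment using arbitrary-degree self-maps of $S^5$ and $S^2\times S^3$, the Derbez--Sun--Wang gluing lemma, the branched double cover $S^2\times S^3\to X_\infty$, and finally Theorem \ref{t:general}. The only cosmetic difference is that you apply Corollary \ref{c:Ru} summand by summand and absorb the resulting $S^5$ pieces via $N\# S^5\cong N$, whereas the paper applies it once to the entire torsion part using the least common multiple.
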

\begin{proof}
 By Theorem \ref{t:Barden}, a simply connected, closed five-manifold $M$ is diffeomorphic to a connected sum $M_{k_1} \# ... \# M_{k_l} \# X_m$, where the
summands $M_{k_i}$, $X_m$ are described at the beginning of this section. Clearly, the rank $k$ of $H_2(M;\Z)$ is equal to the number of $M_\infty$ and
$X_\infty$, i.e. the number of $S^3$-bundles over $S^2$. Furthermore, we may assume that the torsion of $H_2(M;\Z)$
is not trivial, otherwise we appeal to Proposition \ref{p:torsionfree} to deduce that $M$ admits a branched double covering by a product $S^1 \times (\#_k
S^1 \times S^3)$.

By Corollary \ref{c:Ru} (and the comments following that) we deduce that the part of $M_{k_1} \# ... \# M_{k_l} \# X_m$ not containing $M_{\infty}$ or
$X_\infty$ admits a dominant map by $S^5$ whose degree is a power of the least common multiple of the torsion second integral homology groups
$H_2(M_{k_i};\Z)$ and $H_2(X_m;\Z)$, $k_i, m \neq \infty$. Now Lemma
\ref{l:consum} implies that $M$ is dominated by a manifold diffeomorphic to $\#_k (S^2 \times S^3)$ (if $m = \infty$, we additionally use the fact that
$M_\infty$ is a branched double covering of $X_\infty$, as we have seen above in this section). The degree of $\#_k(S^2 \times S^3) \longrightarrow M$ is
clearly determined (up to multiplication by two) by the torsion of $H_2(M;\Z)$. Finally, Theorem \ref{t:general} implies that $\#_k (S^2 \times S^3)$
admits a branched double covering by $S^1 \times (\#_k S^1 \times S^3)$, completing the proof of Theorem \ref{t:5-mfds}.
\end{proof}

\begin{rem}
By Remark \ref{r:generalpillow}, we also have that $\#_k (S^2 \times S^3)$ admits branched double coverings by $S^1 \times (\#_k S^2 \times S^2)$ and $S^2 \times (\#_k S^2 \times S^1)$, and branched four-fold coverings by $T^2 \times (\#_k S^1 \times S^2)$ and $S^1 \times S^2 \times \Sigma_k$, where $\Sigma_k$ is an oriented, closed surface of genus $k$.
\end{rem}

\begin{rem}\label{r:mappingdegrees}
In contrast to Theorem \ref{t:4-mfds}, the statement of Theorem \ref{t:5-mfds} does not provide absolute control of the degree of the map $\#_k(S^2 \times S^3)
\longrightarrow M$ following the branched double covering $S^1 \times (\#_k S^1 \times S^3) \longrightarrow \#_k(S^2 \times S^3)$. This is related to the
problem of determining the sets of (self-)mapping degrees of simply connected five-manifolds, which is essentially still unsolved; cf.~\cite{Tan}. However, an aside application of Theorem \ref{t:Serre} is that the sets of degrees of maps between the summands $M_k$ and $X_m$ (for $k,m \neq \infty$) are infinite, because every multiple
of a power of the torsion second homology of $X_m$ (resp. $M_k$) can be realized as a degree for a map $M_k \longrightarrow X_m$ (resp. $X_m \longrightarrow M_k$); recall that $S^5$ admits self-maps of any degree and it is minimal for the domination relation. In particular, we deduce, without using formality (see~\cite{CL}), that the set
of self-mapping degrees for every simply connected five-manifold is infinite.
\end{rem}

\begin{rem}\label{r:5-mfdsandTan}
Following the ideas of~\cite{Tan}, one can show the existence of a non-zero degree map $\#_k(S^2\times S^3) \longrightarrow M$ using the cell decompositions of $M$ and $\#_k(S^2\times S^3)$. Namely, if $M$ is a simply connected five-manifold with $\rank H_2(M;\Z)=k$, then by~\cite{B} there is a simply connected three-dimensional cell complex $X^3$ with torsion second integral homology group such that 
\[
M=(S^2_1\vee S^3_1\vee\cdots\vee S^2_k\vee S^3_k\vee X^3)\cup_{\varphi}D^5.
\] 
The attaching map $\varphi$ is given as the sum of the Whitehead products $[\iota_i,\eta_i]$ (where $\iota_i$ and $\eta_i$ denote inclusions from $S^2$ and $S^3$ respectively) with a summand of finite order $d$ which depends on the torsion of $H_2(M;\Z)$. Moreover, the connected sum $\#_k(S^2\times S^3)$ has a cell decomposition
\[
\#_k(S^2\times S^3)=(S^2_1\vee S^3_1\vee\cdots\vee S^2_k\vee S^3_k)\cup_{\psi}D^5,
\]
where $\psi$ is the sum of the Whitehead products $[\iota_i,\eta_i]$. Let now the map
\[
f\colon S^2_1\vee S^3_1\vee\cdots\vee S^2_k\vee S^3_k \longrightarrow S^2_1\vee S^3_1\vee\cdots\vee S^2_k\vee S^3_k\vee X^3
\]
given by $f\vert_{S^2_i}=d\cdot\iota_i$ and $f\vert_{S^3_i}=\eta_i$. Then for all $i=1,...,k$ we obtain
\[
f_*[\iota_i,\eta_i]=[f_*\iota_i,f_*\eta_i]=[d\cdot\iota_i,\eta_i]=d\cdot[\iota_i,\eta_i]
\]
and so $f_*(\psi)=d\cdot\varphi$, implying that $f$ extends to a map $\#_k(S^2\times S^3) \longrightarrow M$ of degree $d$; see~\cite{Tan} for further details.
\end{rem}

\section{A further application and final remarks}\label{s:final}

\subsection{Six-manifolds}\label{ss:final1}

In the light of Theorems \ref{t:main} and \ref{t:general} we can further verify that the fundamental classes of certain simply connected manifolds in
dimensions higher than five are representable by products.

As an example we deal with 2-connected, closed six-manifolds. Wall~\cite{Wall1} classified simply connected, closed, smooth six-manifolds (see
also~\cite{Zhubr}). As usual, the empty connected sum is $S^6$.

\begin{thm}[Wall~\cite{Wall1}]
 Let $M$ be a simply connected, closed, smooth six-manifold. Then $M$ is diffeomorphic to $N \# (S^3 \times S^3) \# \cdots \# (S^3 \times S^3)$, where
$H_3(N)$ is finite.
\end{thm}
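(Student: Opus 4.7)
The plan is to inductively split off copies of $S^3 \times S^3$ from $M$ until what remains has finite third homology. First I would set up the homology. Since $M$ is simply connected, $H_1(M;\Z) = 0$, and Poincar\'e duality together with universal coefficients gives $H_5(M;\Z) = 0$ and $H_4(M;\Z) \cong \mathrm{Hom}(H_2(M;\Z),\Z)$. The intersection pairing on $H_3(M;\Z)/\text{torsion}$ is a non-degenerate skew-symmetric form (since $\dim M = 6$ and $3$ is odd), hence symplectic, so $\rank H_3(M;\Z) = 2k$ for some $k \geq 0$. If $k=0$ we set $N=M$ and are done, so assume $k \geq 1$.

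Next I would choose a symplectic basis $\{a_1,b_1,\ldots,a_k,b_k\}$ of $H_3(M;\Z)/\text{torsion}$ with $a_i\cdot b_j = \delta_{ij}$ and $a_i\cdot a_j = b_i\cdot b_j = 0$. By the Hurewicz theorem each class lifts to a map $S^3 \to M$; since $2\cdot 3 = 6 = \dim M$, a generic such map is an immersion with isolated double points. The Whitney trick applies freely, because $M$ is simply connected and $\dim M \geq 5$, so I can remove self-intersections to get embeddings, and then reduce the geometric intersection of $a_i$ with $b_j$ to its algebraic value $\delta_{ij}$, while keeping $a_i$ disjoint from $a_j$ and $b_i$ from $b_j$.

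Then I would examine a tubular neighborhood of $a_1 \cup b_1$. The normal bundle of an embedded $S^3$ in $M^6$ is classified by $\pi_2(\SO(3)) \cong \Z_2$, so there is a framing obstruction. The critical step is to show that within each homology class one can choose an embedded representative whose normal bundle is trivial; this is arranged by modifying the sphere by an element of $\pi_3(M)$ mapping to zero in $H_3(M;\Z)$, or equivalently by tubing on a small immersed sphere carrying the framing obstruction, using the surjectivity of $\pi_3(M) \to H_3(M;\Z)$ and Haefliger-type arguments in the metastable range. Once both $a_1$ and $b_1$ have trivial normal bundles and meet transversally in a single point, the plumbing of their tubular neighborhoods is diffeomorphic to $(S^3\times D^3) \cup_{D^3\times D^3} (D^3 \times S^3) = (S^3 \times S^3) \setminus \mathrm{int}\, D^6$, with boundary $S^5$. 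This exhibits $M$ as $(S^3 \times S^3) \# M'$ with $M'$ again simply connected, closed, and smooth, and a Mayer--Vietoris computation gives $\rank H_3(M';\Z) = 2(k-1)$.

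Iterating $k$ times yields $M \cong N \mathop{\#} \#_k(S^3 \times S^3)$ with $\rank H_3(N;\Z) = 0$, i.e.\ $H_3(N;\Z)$ finite. The principal obstacle is the framing issue in the third paragraph: guaranteeing that a chosen symplectic pair of homology classes can be represented by embedded $3$-spheres with \emph{trivial} normal bundles, so that the plumbing really is $S^3\times S^3$ minus a disk rather than a twisted analogue (whose boundary would be only a homotopy $S^5$). Handling this correctly is exactly the delicate point that forces one to keep track of $w_2(M)$ and of the extra invariants on $H^2(M;\Z)$ that enter Wall's full classification in~\cite{Wall1}.
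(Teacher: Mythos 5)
The paper does not prove this statement; it is quoted verbatim from Wall~\cite{Wall1} and used as a black box, so there is no internal proof to compare against. Your outline is essentially Wall's original argument: the intersection form on $H_3(M;\Z)/\text{torsion}$ is unimodular and skew-symmetric, hence admits a symplectic basis of rank $2k$; a hyperbolic pair is represented by embedded $3$-spheres meeting transversally in one point (the Whitney trick applies since $\dim M = 6 \geq 5$ and $\pi_1(M)=1$), and the union of their tubular neighborhoods is a plumbing diffeomorphic to $(S^3\times S^3)\setminus \mathrm{int}\,D^6$, which splits off a summand. That is the right proof.

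Two corrections are needed. First, the ``principal obstacle'' you identify does not exist: oriented rank-$3$ vector bundles over $S^3$ are classified by $\pi_2(\SO(3))$, and $\pi_2(\SO(3))=\pi_2(\mathbb{RP}^3)=\pi_2(S^3)=0$, not $\Z_2$ (you have confused it with $\pi_1(\SO(3))$; recall also that $\pi_2$ of any Lie group vanishes). Hence the normal bundle of \emph{every} embedded $3$-sphere in a $6$-manifold is automatically trivial, the framing discussion in your third paragraph is vacuous, and $w_2(M)$ plays no role in the splitting --- it enters Wall's classification only in analysing the remaining piece $N$ with finite $H_3$. The proposed repair (changing the embedded sphere by a torsion element of $\pi_3(M)$ to kill a framing obstruction) would in any case need justification; fortunately it is not needed. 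Second, the surjectivity of $\pi_3(M)\to H_3(M)$ for a merely $1$-connected (not $2$-connected) $M$ is not the classical Hurewicz theorem; it follows from Whitehead's exact sequence $H_4(M)\to \Gamma(\pi_2(M))\to \pi_3(M)\to H_3(M)\to 0$. For representing classes by embeddings you may simply invoke Haefliger's theorem, which applies here exactly at the edge of the metastable range since $2\cdot 6 = 3(3+1)$. Finally, regluing the $6$-disk along the $S^5$ boundary of the plumbing is unambiguous because every orientation-preserving diffeomorphism of $S^5$ extends over $D^6$. With these adjustments your sketch is a correct account of Wall's proof.
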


If now the target is 2-connected then $N = S^6$; cf. Smale~\cite{Sm}. In that case the topological and the diffeomorphism
classification coincide (cf. Wall~\cite{Wall1}) and so we obtain:

\begin{cor}\label{c:6-manifolds}
 Every 2-connected, closed six-manifold $M$ admits a branched double covering by $S^1 \times (\#_k S^2 \times S^3)$, where  $k=\frac{1}{2}\rank H_3(M;\Z)$.
\end{cor}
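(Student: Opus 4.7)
My plan splits the argument into a classification step and a construction step. For the classification, I start from Wall's theorem quoted above: a simply connected, closed, smooth six-manifold is diffeomorphic to a connected sum $N \# (S^3\times S^3) \# \cdots \# (S^3\times S^3)$ with $H_3(N)$ finite. The hypothesis that $M$ is $2$-connected is used in two places: first, via the cited result of Smale, to force $N\cong S^6$, so that $M\cong \#_j (S^3\times S^3)$ for some $j\geq 0$; and second, to identify $j$ as $\tfrac12\rank H_3(M;\Z)$, since $H_3(S^3\times S^3)=\Z^2$ and $H_3$ is additive under connected sum in this dimension. Thus $j=k$, and the problem reduces to exhibiting a branched double covering
\begin{align*}
S^1 \times (\#_k\, S^2\times S^3) \longrightarrow \#_k (S^3\times S^3).
\end{align*}

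For the construction step, I would reproduce the strategy of Theorems~\ref{t:main} and~\ref{t:general}, but with the higher-dimensional pillowcase $P\colon S^1\times S^2 \longrightarrow S^3$ from Remark~\ref{r:generalpillow} in place of the two-dimensional pillowcase $T^2\to S^2$. First, for $k=1$, pull back the trivial $S^3$-bundle $S^3\times S^3\to S^3$ along $P$. Since pullback of a trivial bundle is trivial, this gives a branched double covering
\begin{align*}
\widetilde P := P\times \mathrm{id}_{S^3}\colon S^1\times S^2\times S^3 \longrightarrow S^3\times S^3,
\end{align*}
with branch locus the two copies of $S^1\times S^3$ sitting over the branch locus $S^0\times S^1$ of $P$ (which has the required codimension two in the six-dimensional target).

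For $k\geq 2$, I would iterate the fiber-sum trick from the proof of Theorem~\ref{t:main}. View $S^1\times S^2\times S^3$ as a trivial $S^1$-bundle over $S^2\times S^3$ and take a fibered neighborhood $S^1\times D^5 = S^1\times (D^2\times D^3)$ of an $S^1$-fiber lying over a point of $S^2\times S^3$. The key point, supplied directly by Remark~\ref{r:generalpillow}, is that $P^{-1}(D^3) = S^1\times D^2$, so under $\widetilde P$ this neighborhood maps onto $D^3\times D^3 = D^6\subset S^3\times S^3$. Removing $S^1\times D^5$ from two copies of the domain and gluing the resulting $S^1\times S^4$ boundaries produces $S^1\times ((S^2\times S^3)\#(S^2\times S^3))$, while simultaneously the excised disks $D^6$ in two copies of $S^3\times S^3$ glue along $S^5$ to give $(S^3\times S^3)\#(S^3\times S^3)$; the branch loci fit together across the gluing. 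Iterating yields the required branched double covering for all $k$. The one point that needs verification, and which I view as the only real obstacle, is that the preimage neighborhood really has the product form $S^1\times D^5$ mapping to a genuine $6$-disk in the target — but this is exactly the content of the pillowcase identity $P^{-1}(D^3)=S^1\times D^2$ from Remark~\ref{r:generalpillow}, so the construction goes through in verbatim analogy with Theorem~\ref{t:main}.
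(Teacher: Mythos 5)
Your proposal is correct and takes essentially the same route as the paper: the identical classification step (Wall plus Smale, yielding $M\cong\#_k(S^3\times S^3)$ with $k=\tfrac12\rank H_3(M;\Z)$), followed by the generalized pillowcase $S^1\times S^2\to S^3$. The only difference is one of presentation — the paper simply invokes Remark~\ref{r:generalpillow} and Theorem~\ref{t:general} for the covering $S^1\times(\#_k S^2\times S^3)\to\#_k(S^3\times S^3)$, whereas you unpack that black box (pullback for $k=1$, fiber sum for $k\geq 2$) explicitly and correctly.
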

\begin{proof}
 By the above results of Wall~\cite{Wall1} and Smale~\cite{Sm}, every 2-connected, closed six-manifold $M$ is diffeomorphic to $\#_k (S^3 \times S^3)$, 
for some non-negative integer $k = \frac{1}{2} \rank H_3 (M; \Z)$. The proof now follows by the generalized pillowcase map in Remark \ref{r:generalpillow}.
\end{proof}

\begin{rem}
By Remark \ref{r:generalpillow}, $M$ admits also a branched double covering by $S^2 \times (\#_k S^1 \times S^3)$, and branched four-fold coverings by $T^2 \times (\#_k S^2 \times S^2)$, $S^1 \times S^2 \times (\#_k S^2 \times S^1)$ and $S^2 \times S^2 \times \Sigma_k$.

We note that $M$ admits a degree two map by $S^1\times(\#_k T^5)$ as well. For take the branched double covering $S^1 \times (\#_k S^2 \times S^3) \longrightarrow M$ of Corollary \ref{c:6-manifolds} and precompose it with a degree one map $S^1 \times (\#_k T^2 \times T^3) \longrightarrow S^1 \times (\#_k S^2 \times S^3)$, using Lemma \ref{l:consum} and the fact that every sphere is minimal admitting maps of any degree.
\end{rem}

\subsection{Branched coverings in dimension four}\label{ss:final2}

Every dominant map onto a simply connected four-manifold
\begin{align}\label{eq.1}
 S^1 \times (\#_k S^2 \times S^1) \longrightarrow N,
\end{align}
is a priori $\pi_1$-surjective because the target has trivial fundamental group. However, the branched double covering $S^1 \times (\#_k S^2
\times S^1) \longrightarrow \#_k (S^2 \times S^2)$ obtained in Theorem \ref{t:general} (for $M = S^2$) is $\pi_1$-surjective also by construction. In a
previous joint work with Kotschick~\cite{KN} we have shown that there is a branched double covering
\begin{align}\label{eq.2}
 S^1 \times \Sigma_k \longrightarrow \#_k (S^2 \times S^1)
\end{align}
which is again $\pi_1$-surjective (now the target $\#_k (S^2 \times S^1)$ is not simply connected). We multiply (\ref{eq.2}) with the identity map on
$S^1$ to obtain a $\pi_1$-surjective branched double covering
\begin{align}\label{eq.3}
 T^2 \times \Sigma_k \longrightarrow S^1 \times (\#_k S^2 \times S^1).
\end{align}

Now we compose (\ref{eq.3}) with (\ref{eq.1}) to obtain a dominant map $T^2 \times \Sigma_k \longrightarrow N$. This map is not a branched covering,
not even in the case where $N$ is diffeomorphic to $\#_k \CP^2 \#_k \overline{\CP^2}$, because the branch locus
of (\ref{eq.3}) intersects the preimage of the branch locus of (\ref{eq.1}) in a codimension four subset of $S^1 \times (\#_k S^2 \times S^1)$. 

Nevertheless, we note that $T^2 \times \Sigma_k$ is a branched four-fold cover of $\#_k(S^2 \times S^2)$, by Remark \ref{r:generalpillow}. The existence of dominant maps from products $T^2 \times \Sigma_k$ to every simply connected four-manifold has also been shown by Kotschick and L\"oh~\cite{KL}, using a result of Duan and Wang~\cite{DWa}. That result states that if $X$ and $Y$ are oriented, closed four-manifolds and $Y$ is simply connected, then a degree $d \neq 0$ map $f \colon X \longrightarrow Y$ exists if and only if the intersection form of $Y$, multiplied by $d$, is embedded into the intersection form of $X$, where the embedding is given by $H^*(f)$ (the ``only if'' part is obvious).

As we mention in Section \ref{s:4-mfds}, the above existence criterion by Duan and Wang implies that every integer can be realized as the degree of a map from $T^4$ to $\#_3 (S^2 \times S^2)$, although no such map can be deformed to a branched covering by a recent result of Pankka and Souto~\cite{PS}. However, according to Theorem \ref{t:4-mfds}, it is natural to ask when a $\pi_1$-surjective non-zero degree map between two connected, oriented, closed four-manifolds is homotopic to the composition of a branched covering with a pinch map.  
Another natural question in this context is the following (suggested to me by J. Souto): Let $N$ be a connected, oriented, closed four-manifold and $f \colon M \longrightarrow N$ a $\pi_1$-surjective map of non-zero degree. Is there a $k \geq 0$ so that the composition of the pinch map $\#_k(S^2 \times S^2) \# M \longrightarrow M$ with $f$ is homotopic to a branched covering?

\subsection{Domination by products and sets of self-mapping degrees}\label{ss:final3}

Kotschick and L\"oh~\cite{KL} asked whether every closed manifold with finite
fundamental group is dominated by a product. In this paper, we have answered in positive that question in dimensions four and
five and in certain higher dimensions. For all our dominant maps onto a simply connected $n$-dimensional
manifold, the domain can be taken to be a product of the circle with a connected sum of copies of $T^{n-1}$. According to this, one could ask
whether every closed manifold with finite fundamental group is dominated by a product of type $S^1 \times N$, where $N$ is an $(n-1)$-dimensional manifold
(e.g. $N = \#_k T^{n-1}$ for some $k \geq 0$), or more generally, by a product of type $S^m \times N$. This is a considerably stronger question than that of~\cite{KL}, however, 
it seems less likely to be true. For instance, simply connected manifolds which admit self-maps of absolute degree at
most one might not be dominated by products (at least) of type $S^m \times N$. Such manifolds exist and are called inflexible; see for example~\cite{CL}. In this direction, and since the set of self-mapping degrees of $S^m \times N$ is infinite, we ask the following (at least in the simply connected case): Are there examples of inflexible manifolds that are dominated by flexible ones (i.e. by manifolds whose sets of self-mapping degrees are infinite)? 

We note that, if every simply connected, closed $n$-dimensional manifold is dominated by a product $X_1\times X_2$ such that one of the factors $X_i$ has infinite set of self-mapping degrees, then every finite functorial semi-norm in degree $n$ vanishes on simply connected manifolds, which is an open question by Gromov~\cite[Chapter 5G$_{+}$]{gromovmetric}. 

\bibliographystyle{amsplain}

\end{document}